\theoremstyle{plain}
\newtheorem{thm}{Theorem}[section]
\newtheorem{prop}[thm]{Proposition}
\newtheorem{lemma}[thm]{Lemma}
\newtheorem{cor}[thm]{Corollary}
\newtheorem{conj}{Conjecture}
\newtheorem*{question}{Question}
\theoremstyle{definition}
\theoremstyle{remark}
\title{Aspherical $4$-manifolds of odd Euler characteristic}
\author{Allan L. Edmonds}
\address{Department of Mathematics, Indiana University, Bloomington, IN 47405}
\email{edmonds@indiana.edu}
\thanks{}
\begin{document}
\begin{abstract}
An explicit construction of closed, orientable, smooth, aspherical 4-manifolds with any odd Euler characteristic greater than 12 is presented. The manifolds constructed here are all Haken manifolds in the sense of B. Foozwell and H. Rubinstein and can be systematically reduced to balls by suitably cutting them open along essential codimension-one submanifolds. It is easy to construct examples with even Euler characteristic from products of surfaces. And Euler characteristics divisible by 3 are know to arise from complex algebraic geometry considerations. Examples with Euler characteristic 1, 5, 7, or 11 appear to be unknown. 
\end{abstract}
\maketitle
\setcounter{tocdepth}{1}
\tableofcontents
\section{Introduction}

The simplest algebraic-topological invariant of a space is its Euler characteristic. Now any integer can be the Euler characteristic of a closed, orientable $4$-manifold, as one can easily see by forming connected sums of standard familiar manifolds $S^{4}$, $\mathbb{C}P^{2}$, and $T^{4}$. But the Euler characteristic of an aspherical $4$-manifold seems to be highly constrained.\footnote{A path-connected space $X$ is said to be \emph{aspherical} if its higher homotopy groups $\pi_{n}(X)$ vanish for $n\ge 2$. Basic examples include products of surfaces of positive genus. }

It is a well-known conjecture  that the Euler characteristic of  a closed, aspherical $4$-manifold is non-negative. 
One can easily realize any Euler characteristic divisible by 4 by a product of two closed orientable surfaces of positive genus, and one can realize any non-negative integer at all by a product of non-orientable surfaces.  One can then realize any non-negative even integer by the orientable double cover of a product of two non-orientable surfaces. But it seems more difficult to find examples of orientable manifolds with odd Euler characteristic.

\begin{thm}[Main Theorem]
For any odd integer $n\ge 13$, there is a closed, oriented, aspherical, smooth $4$-manifold $X_{n}$ with Euler charactertistic $\chi(X_{n})=n$.
\end{thm}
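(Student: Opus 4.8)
The plan is to realize $X_{n}$ as a union of finitely many elementary aspherical pieces, glued to one another along two‑sided codimension‑one submanifolds, arranged so that (i) reading the construction backwards exhibits a hierarchy in the sense of Foozwell--Rubinstein, whence $X_{n}$ is a Haken $4$-manifold and in particular aspherical, and (ii) the Euler characteristic is additive over the pieces, so that it can be read off combinatorially and forced to equal $n$. Orientability will be arranged piece by piece, and smoothness will come from rounding corners at the end.

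For the pieces I would take products $F\times F'$ of compact orientable surfaces with nonempty boundary. Each such product is a compact, orientable, aspherical $4$-manifold with corners; its codimension‑one faces are the products $(\partial F)\times F'$ and $F\times(\partial F')$, each a circle bundle over a surface‑with‑boundary and hence of Euler characteristic $0$; its codimension‑two corner is $(\partial F)\times(\partial F')$, a disjoint union of tori; and it carries an obvious hierarchy down to a ball (cut $F$ along essential arcs until it becomes a disk, take the product with $F'$, and repeat in the second factor). Finally $\chi(F\times F')=\chi(F)\,\chi(F')$, and as $F$ varies $\chi(F)$ runs over all integers $\le 1$ — odd precisely when $F$ has an odd number of boundary components — so $\chi(F\times F')$ can be made any prescribed integer, in many ways, with odd values requiring an odd‑boundary surface in each factor.

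Next I would glue a finite collection of such blocks along their codimension‑one faces, in pairs, by homeomorphisms respecting the circle‑bundle structures and reversing the induced orientations, continuing until no unmatched face remains; the result is a closed, oriented $4$-manifold, smooth after rounding corners. Cutting it successively along the (two‑sided, $\pi_{1}$-injective, boundary‑incompressible) images of the glued faces returns the original blocks, each with its own hierarchy to a ball, so $X_{n}$ is Haken and therefore aspherical. (One can also see asphericity directly: van Kampen presents $\pi_{1}(X_{n})$ as an iterated graph of groups with vertex groups $\pi_{1}(F)\times\pi_{1}(F')$ and $\pi_{1}$-injective edge groups, and the standard gluing lemma for aspherical spaces applies.) Since every identified face and every identified corner has Euler characteristic $0$, an inclusion--exclusion (Mayer--Vietoris) computation gives
\[
\chi(X_{n})=\sum_{\text{blocks }F\times F'}\chi(F)\,\chi(F').
\]

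The nontrivial step — and the one I expect to be the main obstacle — is the \emph{global} assembly: the blocks and the face pairings must be chosen so that every face is matched to a face carrying the same $3$-manifold, so that orientations are globally consistent, so that the identified submanifolds are genuinely essential (so that the hierarchy is valid and, with it, asphericity), and so that the block Euler characteristics sum exactly to the target odd integer. It is this matching problem, together with the fact that a genuine closed assembly whose cutting submanifolds remain essential requires a certain minimum supply of blocks with the right combinatorics of boundary faces, that forces the bound $n\ge 13$; indeed the smaller odd values $1,5,7,11$ are precisely the ones this approach cannot reach cheaply enough, which is why they remain open. For each odd $n\ge 13$ I would exhibit an explicit recipe — a controlled number of blocks contributing positively to $\chi$, together with a fixed stock of "connector" blocks such as annulus $\times$ (pair of pants), which contribute $0$ — and simply verify that all faces can be paired up, that the configuration is connected, and that each glued face is $\pi_{1}$-injective. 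The local asphericity input is standard; the work is in this bookkeeping and in the small cases near the threshold.
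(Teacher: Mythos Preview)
Your proposal shares the paper's spirit---build $X_n$ from aspherical blocks glued along $\pi_1$-injective codimension-one pieces and read off $\chi$ additively---but it stops precisely where the actual content begins. What you have written is a strategy, not a proof: you never exhibit the promised ``explicit recipe,'' and the phrase ``rounding corners at the end'' papers over the central difficulty.

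Here is the concrete gap. Take your basic odd block $M\times M$ with $M$ a once-punctured torus, $\chi=1$. Its two codimension-one faces are $\partial M\times M$ and $M\times\partial M$. If you identify these (orientation-reversingly), you do \emph{not} get a closed manifold: the corner torus $\partial M\times\partial M$ becomes, after smoothing, the boundary of the result, and that boundary is a torus bundle over $S^1$ with nontrivial monodromy $\Phi=\left(\begin{smallmatrix}0&1\\-1&0\end{smallmatrix}\right)$. This is exactly the paper's Core. The entire problem is now to cap off such torus-bundle boundaries by compact aspherical $4$-manifolds with $\pi_1$-injective boundary and controlled Euler characteristic, and products $F\times F'$ of bounded surfaces cannot do this job: their ``rounded'' boundaries are again torus bundles, so you only push the problem around. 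Your suggested connectors (annulus)$\times$(pair of pants) have the wrong face types to mate with $S^1\times M$ and do not close anything up.

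The paper's solution leaves the world of products entirely. It uses torus bundles over punctured disks with \emph{nontrivial} monodromy (the Splitter and Connector, each with $\chi=0$) to convert $T^2(\Phi)$ into copies of $T^2(\tau)$ with $\tau$ a single Dehn twist, and then builds a Cap with $\partial\,\mathrm{Cap}=T^2(\tau)$ and $\chi=4$ out of genus-$3$ surface bundles together with the Foozwell--Rubinstein torus trick. That $\chi(\mathrm{Cap})=4$ is the paper's main technical input, and it is what produces the threshold $13=1+0+3\cdot 4$; your assertion that ``the matching problem\dots forces the bound $n\ge 13$'' is not an argument but a restatement of the theorem. Without supplying pieces that actually close up the torus-bundle boundaries---and computing their Euler characteristics---your outline does not constitute a proof.
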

The manifolds constructed here are all Haken manifolds in the sense of Foozwell and Rubinstein \cite{FoozwellRubinstein2011} in that they can be systematically reduced to balls by suitably cutting them open along essential codimension-one submanifolds. 

The only other examples of explicit closed, orientable, aspherical $4$-manifolds of  odd Euler characteristic known to the author arise from algebraic geometry.\footnote{See Subsection \ref{subsec:hyperbolization} for some less explicit examples with  odd $\chi$, whose actual Euler characteristics are unclear.} They have Euler characteristics $\chi=3n$ for $n\geq 1$.  See Section \ref{subsec:deeper}. It thus remains a challenge to construct  4-manifolds with other odd Euler characteristics less than 12 (in particular 1, 5, 7, 11), and we offer the following conjecture.

\begin{conj}\label{conj:chi=1}
There is a closed oriented aspherical $4$-manifold $X^{4}$ with Euler characteristic $\chi(X^{4})=1$.
\end{conj}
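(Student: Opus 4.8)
Since this is a conjecture, what follows is a plan of attack rather than a proof. First note that $\chi(X^{4})=2-2b_{1}(X^{4})+b_{2}(X^{4})$ for a closed oriented $4$-manifold, so $\chi=1<2$ forces $b_{1}(X^{4})\ge 1$, the extreme case being $b_{1}=b_{2}=1$; in particular $\pi_{1}(X^{4})$ must be infinite. The idea behind every known closed aspherical $4$-manifold is \emph{asphericalization}: replace a manifold of the desired Euler characteristic by an aspherical one with the same Euler characteristic. So the plan is to fix a closed, oriented, smooth $4$-manifold $M$ with $\chi(M)=1$ --- the simplest being $M=\mathbb{C}P^{2}\#(S^{1}\times S^{3})$, for which $\chi(M)=3+0-2=1$ --- and to seek a hyperbolization functor $h$ (a Gromov--Charney--Davis strict hyperbolization, in the spirit of Subsection~\ref{subsec:hyperbolization}) producing a closed, oriented, aspherical, smooth $4$-manifold $h(M)$, together with a degree-one map $h(M)\to M$, and crucially with $\chi(h(M))=\chi(M)=1$. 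One expects $h(M)$ to be Haken in the sense of Foozwell and Rubinstein, as in the Main Theorem, so the real content is the numerical equality $\chi(h(M))=1$.

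The core step is controlling the Euler characteristic. After triangulating (or cubulating) $M$, strict hyperbolization replaces each $k$-cell by a fixed compact hyperbolic manifold-with-corners $J^{k}$ and reassembles $h(M)$ according to the combinatorics of $M$, so that $\chi(h(M))=\sum_{k=0}^{4}c_{k}\,f_{k}(M)$, where $f_{k}(M)$ is the number of $k$-cells and the constants $c_{k}$ depend only on the pieces $J^{k}$ (with $c_{0}=1$, since vertices hyperbolize to vertices). For a closed $4$-manifold the face numbers $f_{k}(M)$ satisfy the Dehn--Sommerville relations and $\sum_{k}(-1)^{k}f_{k}(M)=\chi(M)$, so the problem reduces to choosing hyperbolized cells $J^{0},\dots,J^{4}$ --- small hyperbolic pieces with the prescribed face structure, built from reflection groups or arithmetic lattices --- for which this weighted sum is triangulation-independent and equal to $\chi(M)=1$. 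One must work in the topological (PL) setting rather than with Ontaneda's Riemannian hyperbolization, where Chern--Gauss--Bonnet forces the $\chi$ of the pieces, hence of $h(M)$, to be large. As evidence that the bounded analogue is genuinely easier, note that $\Sigma_{1,1}\times\Sigma_{1,1}$ is already a smooth aspherical $4$-manifold with $\chi=1$; but its boundary is a closed aspherical $3$-manifold, whose fundamental group has cohomological dimension $3$ and so cannot embed in $\pi_{1}(\Sigma_{1,1}\times\Sigma_{1,1})\cong F_{2}\times F_{2}$ (cohomological dimension $2$), so that boundary is compressible and there is no naive way to cap it off while preserving asphericity.

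The main obstacle --- and the reason this remains a conjecture --- is precisely the control of $\chi$ down to the smallest admissible odd value. Every explicit family of closed aspherical $4$-manifolds known to us either forces a divisibility or loses control of the magnitude: products and bundles of closed orientable surfaces give $4\mid\chi$; fake projective planes and other compact quotients of the complex hyperbolic plane give $3\mid\chi$, with Chern--Gauss--Bonnet rigidly tying $\chi$ to volume; and the known closed hyperbolic $4$-manifolds, the hyperbolization examples of Subsection~\ref{subsec:hyperbolization}, and the construction behind the Main Theorem all produce a $\chi$ that is large and, in the last two cases, not transparently computable. To reach $\chi=1$ one must combine an asphericalization that kills $\pi_{2}$ and all higher homotopy groups with enough arithmetic freedom in the building blocks to land on $1$ exactly, and it is unclear whether the available procedures admit this much fine-tuning in dimension $4$. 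A successful run of the strategy above would in particular yield a closed aspherical $4$-manifold of odd Euler characteristic assembled from negatively curved pieces, which would itself be new; that no such example is presently known is some indication that the difficulty is genuine rather than a matter of bookkeeping.
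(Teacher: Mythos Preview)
You correctly recognize that this is a conjecture, not a theorem, and that the paper offers no proof. Your plan of attack via an Euler-characteristic-preserving hyperbolization is a genuinely different route from the one the paper implicitly suggests. The paper's own strategy is much more concrete: it constructs the \emph{Core} block (Theorem~\ref{thm:one}) --- a compact oriented aspherical $4$-manifold with $\chi=1$ whose boundary is a specific torus bundle $T^{2}(\Phi)$ and is already $\pi_{1}$-injective --- and then observes that Conjecture~\ref{conj:E=0} (every closed oriented Haken $3$-manifold bounds a Haken $4$-manifold with $\chi=0$) would let one cap off that boundary and obtain a closed aspherical example with $\chi=1$. So the paper reduces Conjecture~\ref{conj:chi=1} to Conjecture~\ref{conj:E=0}.

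Your discussion of $\Sigma_{1,1}\times\Sigma_{1,1}$ is on point and the cohomological-dimension argument for compressibility of its boundary is correct; the paper makes the same observation. But you miss the paper's key maneuver: rather than trying to cap off the full (compressible) boundary of $M\times M$, one first \emph{partially self-identifies} the two solid-torus-like pieces $\partial M\times M'$ and $M'\times\partial M$ of $\partial(M\times M)$ via an orientation-reversing swap. What remains is a single torus bundle over the circle, and \emph{that} boundary is $\pi_{1}$-injective. This trick converts your ``no naive way to cap it off'' into a precise residual problem about one explicit $3$-manifold.

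A minor correction: your closing claim that ``no such example is presently known'' of a closed aspherical $4$-manifold of odd $\chi$ assembled from negatively curved pieces overstates things. As the paper notes in Subsection~\ref{subsec:hyperbolization}, hyperbolizing $\#_{s}\mathbb{C}P^{2}$ with $s$ odd already gives closed aspherical $4$-manifolds of odd Euler characteristic; the point is only that one has no control over \emph{which} odd integer results. Your proposed remedy --- engineer hyperbolized cells $J^{k}$ so that $\chi(h(M))=\chi(M)$ --- is an interesting idea, but it is entirely speculative: no known hyperbolization functor preserves $\chi$, and you give no indication of how the Dehn--Sommerville constraints could force the weighted face sum to collapse to $\sum_{k}(-1)^{k}f_{k}$. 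By contrast, the paper's reduction isolates a single concrete $3$-manifold and asks for a $\chi=0$ filling, which, while still open, is at least a sharply posed problem.
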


The main construction uses certain basic building blocks of aspherical manifolds with standard aspherical and $\pi_{1}$-injective boundary.
The starting point is the product $M^{2}\times M^{2}$ where $M^{2}$ is a once-punctured torus. The sides of this manifold are then partially identified to create a manifold with $\chi=1$ and boundary a certain torus bundle over the circle. We then argue that the torus bundle in question also bounds an aspherical $4$-manifold built out of a piece with $\chi=0$ and three with  $\chi=4$. This step depends on analyzing and tweaking a  result of Foozwell and Rubinstein \cite{FoozwellRubinstein2016}, and yields the example with $\chi=13$. The more general result is an elaboration involving more pieces. 

The main steps in the proof are stated and the overall argument is given in Section \ref{sec:blocks}. The steps involve pieces we call Cores, Splitters, Connectors, and Caps, which are glued together along boundary components to create the required manifolds. The individual steps are completed in subsequent sections. 


As part of the constructions discussed in this paper one tries to find for a closed oriented Haken $3$-manifold $M^{3}$ a compact, oriented Haken $4$-manifold $W^{4}$ with $\partial W^{4}=M^{3}$,  (with boundary understood to be $\pi_{1}$-injective)  and Euler characteristic as small as possible.

\begin{conj}\label{conj:E=0}
For any closed oriented Haken $3$-manifold $M^{3}$ there is a compact oriented Haken $4$-manifold $W^{4}$ with $\partial W^{4}=M^{3}$ (and $\pi_{1}$-injective boundary) and Euler characteristic $\chi(W^{4})=0$.
\end{conj}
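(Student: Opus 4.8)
We sketch a strategy toward Conjecture~\ref{conj:E=0}. First, note that \emph{some} compact oriented aspherical $4$-manifold $W$ with $\partial W=M^{3}$ and $\pi_{1}$-injective boundary already exists: bound $M$ by any oriented $4$-manifold (possible since $\Omega_{3}^{SO}=0$) and apply relative hyperbolization rel the boundary, using that $M$ is aspherical. So the real content of the conjecture is the extra demand that $\chi(W)=0$, together with the Haken condition. The plan is to induct on the JSJ (geometric) decomposition $M=N_{1}\cup\cdots\cup N_{k}$ along incompressible tori, and to build $W=V_{1}\cup\cdots\cup V_{k}$, where for each geometric piece $N_{i}$ the block $V_{i}$ is a compact oriented aspherical Haken $4$-manifold with $\pi_{1}$-injective boundary, $\chi(V_{i})=0$, and $\partial V_{i}$ consisting of one copy of $N_{i}$ together with a ``tube'' $T^{2}\times[0,1]$ along each torus component of $\partial N_{i}$. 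The blocks are glued tube-to-tube following the JSJ gluing pattern of $M$; since $M$ is closed every tube is matched, so $\partial W=N_{1}\cup\cdots\cup N_{k}=M$, and because each gluing is along a copy of $T^{2}\times[0,1]$ (Euler characteristic $0$), additivity gives $\chi(W)=\sum_{i}\chi(V_{i})=0$. This is the same additivity that drives the Cores/Splitters/Connectors/Caps calculus of the Main Theorem, now run over an arbitrary JSJ pattern; the Haken and $\pi_{1}$-injectivity properties of $W$ should follow from those of the blocks together with incompressibility of the JSJ tori, via the hierarchy theory of Foozwell and Rubinstein.

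The blocks for the non-hyperbolic geometries should be constructible by hand. The basic device is: if a closed oriented $3$-manifold $P$ carries a \emph{free, orientation-reversing} involution $\iota$, then the twisted $I$-bundle $E=P\times_{\bZ/2}[-1,1]$ over the (non-orientable) quotient $P/\iota$ is a compact \emph{oriented} aspherical $4$-manifold with $\partial E=P$; the inclusion realizes the index-two subgroup $\pi_{1}P<\pi_{1}(P/\iota)$, so the boundary is $\pi_{1}$-injective, $\chi(E)=\chi(P/\iota)=\tfrac12\chi(P)=0$, and $E$ is Haken because its deformation retract $P/\iota$ is a closed aspherical $3$-manifold whose fundamental group contains a surface subgroup and is therefore itself Haken. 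Every surface $\Sigma_{g}$, $g\ge 1$, carries such an involution --- the deck transformation of its orientation double cover $\Sigma_{g}\to N_{g+1}$, where $N_{g+1}$ is the non-orientable closed surface of Euler characteristic $1-g$ --- so $\Sigma_{g}\times S^{1}$, and in particular $T^{3}$, bounds the corresponding oriented twisted $I$-bundle, and equivalently bounds the $\Sigma_{g}$-bundle over the M\"obius band whose monodromy around the core is that involution. Relativized versions of these devices --- products with M\"obius bands, disk-bundles and twisted $I$-bundles over annuli, M\"obius bands and Klein bottles --- should handle the Seifert and graph pieces together with their boundary tubes, while the $\mathrm{Sol}$ pieces and torus bundles over $S^{1}$ are precisely the case treated by the result of Foozwell and Rubinstein \cite{FoozwellRubinstein2016} that the Main Theorem already exploits.

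The hard part will be the hyperbolic pieces, and above all a \emph{closed} hyperbolic Haken $3$-manifold $M$. Such an $M$ is usually chiral, so by Mostow rigidity it admits no orientation-reversing self-diffeomorphism at all; the free-involution device is then unavailable, and $M$ has no canonical fibration whose monodromy one could extend over a $3$-manifold. The attack I would try is: apply Agol's virtual fibering theorem to get a finite regular cover $\widetilde M\to M$ fibering over $S^{1}$ with pseudo-Anosov surface monodromy $\phi$, arranging (after passing to a further cover if necessary) that the deck group $G$ respects the fibration; then construct a bounding $4$-manifold $\widetilde W$ for $\widetilde M$ \emph{$G$-equivariantly}, with $G$ acting freely and extending the deck action on $\widetilde M=\partial\widetilde W$ and with $\chi(\widetilde W)=0$ --- for instance as a mapping torus of a $G$-equivariant extension of $\phi$ over a suitable compact aspherical $3$-manifold with incompressible boundary --- and finally pass to $W=\widetilde W/G$, so that $\partial W=M$, $W$ is aspherical with $\pi_{1}$-injective boundary, and $\chi(W)=\chi(\widetilde W)/|G|=0$. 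Producing this equivariant, aspherical, Haken bounding manifold over which $\phi$ extends, with the $G$-action free and with Euler characteristic exactly $0$, is the crux; it seems quite likely that a genuinely new idea is needed here rather than an assembly of the constructions above, and a reasonable weaker goal --- already realized for one specific torus bundle in the Main Theorem --- is to bound $M$ with $|\chi(W)|$ small but not necessarily zero.
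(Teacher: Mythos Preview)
The statement you are attempting to prove is labelled \emph{Conjecture}~\ref{conj:E=0} in the paper, and the paper offers no proof of it; it is posed as an open problem, with the remark that it would imply Conjecture~\ref{conj:chi=1}. So there is no ``paper's own proof'' to compare against, and any valid proof would be a new theorem, not a reproduction of the author's argument.

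Your write-up is candid that it is a strategy rather than a proof, and you correctly isolate the closed hyperbolic case as the crux and say a new idea is likely needed there. But there is a concrete error even in the part you regard as routine. You assert that ``the $\mathrm{Sol}$ pieces and torus bundles over $S^{1}$ are precisely the case treated by the result of Foozwell and Rubinstein that the Main Theorem already exploits.'' That is not what the paper establishes. The Foozwell--Rubinstein input (the torus trick of \cite{FoozwellRubinstein2016}) is used in Section~\ref{sec:caps} to build the Cap with $\chi=4$, not $\chi=0$, and the author explicitly writes that ``it would be quite interesting if one could reduce the Euler characteristic to $0$ in this construction.'' For a general torus bundle over $S^{1}$ the paper only records $\mathcal{E}\le 20$. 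So even the special case of Conjecture~\ref{conj:E=0} for the single torus bundle $T^{2}(\tau)$ with Dehn-twist monodromy is open in this paper, and your JSJ scheme would already stall at a Sol or Nil block. Your twisted $I$-bundle device does not rescue this: it requires a \emph{free} orientation-reversing involution on the $3$-manifold, and a torus bundle $T^{2}(A)$ need not admit one (it would force $T^{2}(A)\cong T^{2}(A^{-1})$ via a free involution, which is not automatic for general $A\in\mathrm{SL}(2,\bZ)$). Likewise, the assertion that $P/\iota$ ``contains a surface subgroup and is therefore itself Haken'' is not a valid implication; non-Haken closed hyperbolic $3$-manifolds contain surface subgroups.

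In short: the paper does not prove this statement, your proposal does not either, and the gap already appears at the torus-bundle blocks, well before the hyperbolic case you flag.
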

The construction of this paper shows that Conjecture \ref{conj:E=0} implies Conjecture \ref{conj:chi=1}.

Less boldly one might simply define the \emph{minimum Euler invariant} of any closed, oriented aspherical $3$-manifold $M^{3}$ by
\[
\mathcal{E}(M^{3}) = \min\{\chi(W^{4}): \partial W^{4}=M^{3}\}
\]
where it is understood that $W^{4}$ ranges over compact, connected, oriented aspherical manifolds  with $\pi_{1}$-injective boundary $M^{3}$.

\begin{question}
For any closed, oriented aspherical $3$-manifold $M^{3}$, what is $\mathcal{E}(M^{3})$? Is  there a  $3$-manifold $M^{3}$ such that $\mathcal{E}(M^{3})\ne 0$? 
\end{question}

The results in this paper would show that for a torus bundle over the circle we have $\mathcal{E}\le 20$.

\subsubsection*{Acknowledgements} Thanks to Mike Davis, Chuck Livingston, Ron Stern, and Matthew Stover for useful comments and pointers to the literature at various stages in this investigation.

\section{Brief survey of related results}

\subsection{Simplest examples}
Examples of closed, orientable, aspherical $4$-manifolds arising from aspherical surfaces.

$\chi=0$: $4$-torus $T^{4}$, and many others, e.g. $M^{3}\times S^{1}$, where $M^{3}$ is any aspherical $3$-manifold.

$\chi=4n$: product of two orientable, aspherical surfaces. $\chi(T_{g}\times T_{h})=4(1-g)(1-h)$

$\chi=2n$: orientable double cover $(U_{g}\times U_{h})^{\sim}$ of a product of two non-orientable, aspherical surfaces, where $\chi(U_{g}\times U_{h})=(2-g)(2-h)$. (Of course, among non-orientable manifolds $U_{g}\times U_{h}$ realizes any non-negative Euler characteristic.)

\subsection{Deeper examples}
\label{subsec:deeper}
$\chi=2$: Aspherical rational homology $4$-sphere (Luo \cite{Luo1988}). Later integral homology $4$-spheres were obtained by Ratcliffe and Tschantz \cite{RatcliffeTschantz2005}. Of course $\chi=2$ is also realized as the orientable double cover of the product of two non-orientable surfaces of Euler characteristic $-1$.

$\chi=3n$: Mumford's  \cite{Mumford1979} fake projective plane, with the rational homology of $\mathbb{C}P^{2}$. This is part of a family of $50$ such manifolds (up to homeomorphism) arising in algebraic geometry as certain complex surfaces of general type that appear as ball quotients. It is known that all of these manifolds have finite nontrivial integral first homology groups.  It is also known that these fake projective planes are not Haken. This follows from a result of Stover \cite{Stover2007,Stover2013} who showed that their fundamental groups cannot split as a free product with amalgamation (and also do not admit a surjection onto $\mathbb{Z}$).

In addition there is the Cartwright-Steger manifold $S_{1}$ \cite{CartwrightSteger2010} (discovered in the process of classifying fake projective planes) with $\chi=3$, which is also a ball quotient, but additionally has the property that its first homology is infinite. Passing to covering spaces we therefore obtain aspherical $4$-manifolds $S_{n}$ with $\chi=3n$ for all positive integers $n$.  The Cartwright-Steger manifold (as well as its finite covers) appears likely to be Haken, since it can be expressed as a fibering with singularities over a torus with generic fiber a surface of genus 19.

Other examples arise from Hirzebruch's construction \cite{Hirzebruch1983} of abelian branched covers of the complex projective plane, branched along suitable arrangements of lines. These also have Euler characteristics divisible by 3.

Yet earlier examples of complex surfaces of general type and  $c_{1}^{2}=3c_{2}$ (implying $\chi=3\sigma$) are said to have been known to Borel as early as 1963.

\subsection{Hyperbolization}\label{subsec:hyperbolization}
The process of Gromov hyperbolization is well known to produce aspherical manifolds with control on signature, but little control on the Euler characteristic.

\begin{prop}
For every integer $s$ there exists a closed, oriented, aspherical $4$-manifold $X$ with signature $\sigma(X)=s$.
\end{prop}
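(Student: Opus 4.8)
The plan is to realize an arbitrary signature by starting from a single building block of known signature and using the Gromov--Davis hyperbolization functor, which preserves signature while forcing asphericity.

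First I would recall the relevant properties of the hyperbolization operation $h(\cdot)$ on a cubical (or simplicial) complex $K$. For a closed manifold, hyperbolization produces a closed aspherical manifold $h(K)$ of the same dimension, with a degree-one map $h(K)\to K$; moreover Davis--Januszkiewicz showed this map is a rational homology isomorphism-like statement fails in general, but the key facts we need do hold: $h(K)$ has the same dimension, is a manifold when $K$ is, is orientable when $K$ is, and—crucially—the hyperbolization map $f\colon h(K)\to K$ induces an isomorphism on rational cohomology through a transfer/section argument, or at least preserves the signature. The cleanest route is to invoke the known fact (Davis--Januszkiewicz, Charney--Davis) that for a closed oriented manifold $M^{4k}$ the hyperbolized manifold $h(M^{4k})$ satisfies $\sigma(h(M^{4k})) = \sigma(M^{4k})$; this follows because the intersection form of $h(M)$ contains that of $M$ as an orthogonal direct summand via the degree-one map, and the ``extra'' part of the hyperbolization contributes a split form, hence signature zero. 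So it suffices to start with a closed oriented smooth $4$-manifold of signature exactly $s$.

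Second I would produce such a starting manifold. For $s>0$ take the connected sum of $s$ copies of $\mathbb{C}P^{2}$; for $s<0$ take $|s|$ copies of $\overline{\mathbb{C}P^{2}}$; for $s=0$ take $S^4$ (or $T^4$). These are closed, oriented, smooth $4$-manifolds with $\sigma = s$. Triangulate (or cubulate) the chosen manifold $M$, apply the hyperbolization functor to obtain $X = h(M)$, and note $X$ is closed, oriented, smooth (hyperbolization can be performed so as to preserve the smooth structure, or one can pass through a PL manifold which in dimension $4$ carries a unique smooth structure compatible with it), and aspherical. By the signature-preservation property, $\sigma(X) = \sigma(M) = s$, completing the proof.

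The main obstacle is pinning down precisely which version of hyperbolization to cite and verifying that it preserves the signature in the form claimed. Different constructions (Gromov's, the Charney--Davis ``strict'' hyperbolization, the Davis--Januszkiewicz functorial version) have somewhat different formal properties; in particular one must ensure orientability is preserved and that there is a degree-one map inducing an injection of the rational intersection form as an orthogonal summand. I expect the Charney--Davis strict hyperbolization together with the Davis--Januszkiewicz computation of the rational homotopy type of $h(K)$ gives exactly what is needed, so the proof will mostly consist of correctly quoting those references and assembling the elementary building blocks $\#^{s}\mathbb{C}P^{2}$; no genuinely new argument is required. (Note that nothing here controls $\chi(X)$, which is why this proposition is only a remark about signature and not about Euler characteristic—the hyperbolization blows up $\chi$ uncontrollably.)
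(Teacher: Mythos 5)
Your construction is the same as the paper's: hyperbolize $\#_{s}\mathbb{C}P^{2}$ (or its orientation reverse) and appeal to signature preservation under hyperbolization. The one place where your justification goes astray is the mechanism you propose for why $\sigma(h(M))=\sigma(M)$. It is true that a degree-one map $h(M)\to M$ makes the rational intersection form of $M$ an orthogonal direct summand of that of $h(M)$, but the claim that the complementary summand ``contributes a split form, hence signature zero'' is not a general property of degree-one maps and is not justified here; if it were automatic, every degree-one map would preserve signature, which is false. The correct (and simpler) reason, which is what the paper and Davis--Januszkiewicz actually use, is that hyperbolization preserves the \emph{oriented bordism class} of the input manifold --- the map $h(M)\to M$ pulls back the stable tangent bundle, so $h(M)$ and $M$ have the same Stiefel--Whitney and Pontryagin numbers and are cobordant --- and the signature is an oriented bordism invariant (Hirzebruch). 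Replace your intersection-form argument with this bordism-invariance statement and the proof is complete; your remarks about smoothness in dimension $4$ and about the lack of control on $\chi$ are both fine and consistent with the paper.
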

\begin{proof}[Proof sketch]
Following Davis and Januszkiewicz  \cite{DavisJanuszkiewicz1991} one simply applies a suitable hyperbolization process to $\#_{s} \mathbb{C}P^{2}$, which preserves oriented bordism class, hence signature.
\end{proof}

If $s$ is odd this produces a closed aspherical $4$-manifold $X$ with $\chi(X)$ odd, since $\chi\equiv \sigma\mod 2$. In this approach it is far from clear exactly which  integers can be obtained as Euler characteristics. In practice the Euler characteristic seems quite large.

\section{Basic formulas and facts}
\subsection{Aspherical unions}
We use repeatedly the following standard result.
\begin{thm}[J.H.C. Whitehead \cite{Whitehead1939}]
Suppose $Z$ is a finite cell complex expressed as a union $X\cup Y$ of subcomplexes such that $X$, $Y$, and each component of $X\cap Y$ are aspherical and each component of $X\cap Y$ is $\pi_{1}$-injective in $X$ and $Y$. Then $Z$ is also aspherical. 
\end{thm}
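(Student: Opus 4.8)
The plan is to reduce asphericity of $Z$ to contractibility of its universal cover and to verify the latter by a Mayer--Vietoris computation carried out over the Bass--Serre tree of $\pi_1(Z)$. We may assume $Z$ is connected (otherwise argue componentwise). Since the universal cover $\widetilde Z$ is a simply connected CW complex, by the Hurewicz theorem and Whitehead's theorem on weak equivalences it suffices to show that $\widetilde H_\ast(\widetilde Z;\mathbb{Z})=0$.

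First I would record how the given decomposition lifts. Let $p\colon \widetilde Z\to Z$ be the universal covering. By van Kampen's theorem, the hypothesis that each component of $X\cap Y$ is $\pi_1$-injective in $X$ and in $Y$ exhibits $\pi_1(Z)$ as the fundamental group of a finite graph of groups whose vertex groups are the fundamental groups of the components of $X$ and of $Y$ and whose edge groups are the fundamental groups of the components of $X\cap Y$; by the normal form theorem for such groups, the inclusions induce injections $\pi_1(X)\hookrightarrow\pi_1(Z)$ and $\pi_1(Y)\hookrightarrow\pi_1(Z)$. Hence each component of $p^{-1}(X)$ is a covering of a component of $X$ with trivial fundamental group, i.e.\ a copy of the universal cover $\widetilde X$; likewise for $Y$, and --- using the $\pi_1$-injectivity of the intersection components once more --- each component of $p^{-1}(X\cap Y)$ is a copy of $\widetilde{X\cap Y}$. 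All of these pieces are \emph{contractible}, since $X$, $Y$, and the components of $X\cap Y$ are aspherical.

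Thus $\widetilde Z$ is built from contractible vertex pieces glued along contractible edge pieces, with the combinatorial pattern of the gluing given by the Bass--Serre tree $T$ on which $\pi_1(Z)$ acts, the vertices and edges of $T$ indexing the pieces. For a finite subtree $S\subseteq T$ let $\widetilde Z_S\subseteq\widetilde Z$ be the union of the pieces indexed by the vertices and edges of $S$. Passing from $S$ to $S$ together with one new leaf vertex attaches a contractible vertex piece to $\widetilde Z_S$ along a single contractible edge piece; and the union of two contractible CW complexes along a nonempty connected contractible subcomplex is again contractible (van Kampen makes it simply connected, Mayer--Vietoris makes its reduced homology vanish, and then Hurewicz and Whitehead make it contractible). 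By induction on the number of vertices, every $\widetilde Z_S$ is contractible. Finally $\widetilde Z=\varinjlim_S\widetilde Z_S$ over the directed family of finite subtrees of $T$, and since every singular cycle has compact image, $\widetilde H_\ast(\widetilde Z;\mathbb{Z})=\varinjlim_S\widetilde H_\ast(\widetilde Z_S;\mathbb{Z})=0$. Being a simply connected CW complex with trivial reduced homology, $\widetilde Z$ is contractible, so $Z$ is aspherical.

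The conceptual heart --- and the step I expect to take the most care --- is the lifting statement in the second paragraph: that the decomposition $Z=X\cup Y$ develops, in the universal cover, into a tree of copies of $\widetilde X$ and $\widetilde Y$ meeting along copies of $\widetilde{X\cap Y}$ indexed by $T$. This is exactly where both hypotheses (asphericity, and $\pi_1$-injectivity of the intersection components) are genuinely used, and it requires the Bass--Serre/normal-form machinery in full generality when $X$, $Y$, or $X\cap Y$ is disconnected, rather than just the easier amalgamated-product case. Once that structure is in place, the passage through the infinitely many pieces is routine, handled by the filtration of $\widetilde Z$ by preimages of finite subtrees of $T$ and the exactness of filtered colimits.
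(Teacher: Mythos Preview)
Your proposal is correct and is precisely the approach the paper sketches: the paper's one-line ``direct proof'' is that one constructs the universal cover of $Z$ from copies of the universal covers of $X$ and $Y$ glued along copies of the universal cover of $X\cap Y$, and you have spelled this out carefully via the Bass--Serre tree and a filtration by finite subtrees. The only place to tidy is the phrase ``a copy of $\widetilde{X\cap Y}$,'' which should of course read ``a copy of the universal cover of the relevant component of $X\cap Y$.''
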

A direct prooof constructs the universal covering of $Z$ out of copies of the universal coverings of $X$ and $Y$ glued together along copies of the universal covering of $X\cap Y$.

We will apply this result in a situation where $Z$ is obtained from an aspherical  space $X$ by identifying two aspherical and $\pi_{1}$-injective subspaces $Y_{1}$ and $Y_{2}$. One reduces this situation to the preceding one by viewing it up to homotopy as being obtained by attaching $Y_{1}\times I$ to $X$.

\subsection{Euler characteristic}
We will need a few simple standard facts. Here we understand the spaces to have the homotopy types of finite complexes.
\subsubsection*{Sum formula}If $Z=X\cup Y$, then  $\chi(Z)=\chi(X)+\chi(Y)-\chi(X\cap Y)$.

\subsubsection*{Product formula}If $Z=X\times Y$, then $\chi(Z)=\chi(X)\chi(Y)$.

Of course the Euler characteristic of an oriented manifold is independent of any choice of orientation, so that  in that case $\chi(-X)=\chi(X)$.

\section{Building blocks and overall construction}
\label{sec:blocks}
Here we state four propositions whose proofs are deferred to subsequent sections, and use the propositions to give a proof of the main theorem. Our starting point is the following construction.

\begin{prop}[Core]\label{prop:core}
There is a compact, oriented, smooth, aspherical $4$-manifold $\rm{Core}$ with $\chi=1$. The aspherical and $\pi_{1}$-injective boundary  of $\rm{Core}$ is the torus bundle over the circle $T^{2}(\Phi)$ with monodromy map  given by 
$$\Phi =  \left(\begin{array}{rr} 0 & 1 \\ -1& 0 \end{array}\right).$$
\end{prop}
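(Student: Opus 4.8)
The plan is to build the Core by partially identifying the boundary of the product $M^2 \times M^2$, where $M^2$ is a once-punctured torus, as the introduction indicates. First I would record the relevant data about $N := M^2 \times M^2$: since $\chi(M^2) = -1$, the product formula gives $\chi(N) = 1$, and $N$ is aspherical with $\pi_1(N) = F_2 \times F_2$. Its boundary is $\partial N = (\partial M^2 \times M^2) \cup (M^2 \times \partial M^2)$, a union of two copies of $S^1 \times M^2$ glued along $S^1 \times \partial M^2 = T^2$; this is the manifold obtained by doubling $S^1 \times M^2$ along its boundary torus, which is itself a surface bundle over $S^1$ (indeed one can see $\partial N$ as $(M^2 \cup_{\partial} M^2)$-bundle, i.e. a $\Sigma$-bundle over $S^1$ for $\Sigma$ a genus-two surface, or dually as a torus-bundle picture after the identification below). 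The idea is to glue the two boundary pieces $\partial M^2 \times M^2$ and $M^2 \times \partial M^2$ together by a diffeomorphism that restricts compatibly on their common torus, producing $X := N / \!\sim$ with a single boundary component.

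The key step is to choose the gluing diffeomorphism $h \colon \partial M^2 \times M^2 \to M^2 \times \partial M^2$ carefully. The natural choice is the ``swap'' map coming from the coordinate exchange $(x,y) \mapsto (y,x)$ on $M^2 \times M^2$, suitably interpreted: $\partial M^2 \times M^2 \cong S^1 \times M^2$ and $M^2 \times \partial M^2 \cong M^2 \times S^1$, and the swap identifies these. One must check that the swap is compatible along the overlap torus $\partial M^2 \times \partial M^2$ and that, after the identification, the result is a manifold (the identification of codimension-one boundary pieces along a codimension-two corner locus is handled by the standard straightening-the-angle/collar argument, and up to homotopy by Whitehead's theorem applied as in the remark after it in the excerpt, attaching $(\partial M^2 \times M^2) \times I$ to $N$). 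By the sum formula, $\chi(X) = \chi(N) + \chi((\partial M^2 \times M^2) \times I) - \chi(\text{double of overlap})$; since both correction terms vanish ($\partial M^2 \times M^2 \times I \simeq S^1 \times M^2$ has $\chi = 0$, and the doubled overlap is $S^1 \times S^1 \times$ something of $\chi = 0$), we get $\chi(X) = 1$. Asphericity of $X$ follows from Whitehead's theorem since $N$, the gluing region $S^1 \times M^2$, and their intersection are all aspherical and the relevant $\pi_1$-inclusions are injective (the free group $F_2 = \pi_1(M^2)$ includes as a retract-type subgroup, and $\pi_1(S^1 \times \partial M^2) = \mathbb{Z}^2$ includes injectively into $F_2 \times F_2$).

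Next I would identify $\partial X$ explicitly as the torus bundle $T^2(\Phi)$. After the swap identification, a point of $\partial X$ lies on $\partial M^2 \times M^2$, and the remaining free part of this boundary is (a collar of) $\partial M^2 \times \partial M^2$ — wait, more carefully: $\partial N$ has two pieces, we glue them along the swap, so $\partial X = (\partial M^2 \times M^2 \text{ glued to } M^2 \times \partial M^2 \text{ along swap})$, which collapses to a single copy of $S^1 \times M^2$ — no: the boundary of the quotient is what remains unidentified. In fact $\partial N = A \cup_T B$ with $A = \partial M^2 \times M^2$, $B = M^2 \times \partial M^2$, $T = \partial M^2 \times \partial M^2$; gluing $A$ to $B$ via $h$ (with $h|_T$ an automorphism of $T$) yields $\partial X = A/(a \sim h(a))$, the mapping torus of $h|_T$ viewed correctly — that is, $\partial X$ fibers over $A/(T\text{-identification})$. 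Concretely, $\partial X$ is the $T^2$-bundle over $S^1$ whose monodromy is the induced map on $H_1(T^2) = \mathbb{Z}^2$, and the swap on $M^2 \times M^2$ restricted to the boundary circle data produces exactly the order-four matrix $\Phi = \left(\begin{smallmatrix} 0 & 1 \\ -1 & 0 \end{smallmatrix}\right)$ (the sign reflecting orientation reversal forced by matching the outward normals). I would verify $\Phi$ by tracking oriented meridian–longitude bases on the two boundary tori under the swap and the collar reversal. Finally, $T^2(\Phi)$ is aspherical (it is a closed $3$-manifold with $\pi_1$ an extension of $\mathbb{Z}$ by $\mathbb{Z}^2$, hence a $K(\pi,1)$ — in fact a Sol or Nil or Euclidean manifold; since $\Phi$ has finite order it is Euclidean), and $\pi_1$-injectivity of $\partial X \hookrightarrow X$ follows from the Whitehead decomposition and the fact that $\pi_1(T^2) \hookrightarrow \pi_1(S^1\times M^2) \hookrightarrow \pi_1(X)$ is injective at each stage.

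The main obstacle I anticipate is not the Euler characteristic or asphericity bookkeeping — those are forced by the product and sum formulas plus Whitehead's theorem — but rather pinning down the smooth structure near the corner and, especially, computing the monodromy $\Phi$ correctly with signs. The corner issue is routine (straighten the angle along $\partial M^2 \times \partial M^2$ using collars, as in standard handle/gluing constructions), but getting $\Phi$ to come out as the stated order-four matrix rather than, say, $\left(\begin{smallmatrix} 0 & 1 \\ 1 & 0 \end{smallmatrix}\right)$ or its inverse requires care: the swap $(x,y)\mapsto(y,x)$ is orientation-preserving on $M^2\times M^2$ (even permutation of the two $2$-dimensional factors), so it restricts to an orientation-preserving identification of boundary collars, but matching \emph{outward} normals on $A$ with \emph{outward} normals on $B$ introduces a reflection on the circle directions, and this is precisely what turns the naive permutation matrix $\left(\begin{smallmatrix} 0&1\\1&0\end{smallmatrix}\right)$ (determinant $-1$) into the rotation $\Phi$ (determinant $+1$), consistent with $\partial X$ being orientable. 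I would do this computation by fixing an explicit identification $M^2 = T^2 \setminus \mathrm{int}(D^2)$ with coordinates, choosing the boundary circle with its induced orientation, and writing the swap on $H_1(\partial M^2) \oplus H_1(\partial M^2)$ under the two chart identifications $\partial M^2 \times \partial M^2 \to T^2$.
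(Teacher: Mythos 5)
Your overall architecture matches the paper's: take $M\times M$ for $M$ a once-punctured torus, round the corner, identify $\partial M\times M$ with $M\times\partial M$, and read off $\chi=1$, asphericity via Whitehead, and the boundary as the mapping torus of the induced map on the corner torus. The Euler characteristic and asphericity bookkeeping are fine. But there is a genuine error at exactly the point you flag as delicate: the choice of gluing map and the orientation analysis.

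You assert that the plain swap $(x,y)\mapsto(y,x)$ works, and that ``matching outward normals'' introduces a reflection that converts the permutation matrix $\left(\begin{smallmatrix}0&1\\1&0\end{smallmatrix}\right)$ into $\Phi=\left(\begin{smallmatrix}0&1\\-1&0\end{smallmatrix}\right)$. This is backwards. The swap is the restriction of an \emph{ambient} orientation-preserving diffeomorphism of $M\times M$ carrying one boundary piece to the other; such a map already carries outward normals to outward normals, so it \emph{preserves} the induced boundary orientations, and no extra reflection appears. For the identification space to be orientable the gluing map must \emph{reverse} the induced boundary orientations (compare the mapping-torus criterion: $F(\varphi)$ is orientable iff $\varphi$ preserves orientation, precisely because the two ends of $F\times[0,1]$ carry opposite induced orientations). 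Hence gluing by the plain swap produces a non-orientable $4$-manifold, and on the corner torus the induced monodromy really is $\left(\begin{smallmatrix}0&1\\1&0\end{smallmatrix}\right)$, of determinant $-1$, whose mapping torus is a non-orientable torus bundle --- not $T^{2}(\Phi)$. The paper's fix is to modify the gluing map itself: use $(x,y)\mapsto(y,\overline{x})$, where $\overline{\phantom{u}}$ is an orientation-reversing involution of $M$ restricting to a reflection of $\partial M$ (e.g.\ $(z,w)\mapsto(w,z)$ on $T^{2}$ with an invariant disk removed). This identification is orientation-reversing on the boundary pieces, the quotient is orientable, and on $H_{1}$ of the corner torus it sends $e_{1}\mapsto -e_{2}$, $e_{2}\mapsto e_{1}$, which is exactly $\Phi$. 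Your proof needs this modification; as written, the construction does not produce an orientable manifold with the stated boundary.
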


We adopt the following notation for circle bundles: If $\varphi:F\to F$ is a homeomorphism, the $F(\varphi):=(F\times [0,1])/\!\!\sim$, where $(x,0)$ is identified with $(\varphi(x),1)$. Assuming $F$ is oriented and $\varphi$ is orientation-preserving we endow $F\times [0,1]$ with a product orientation, which induces one on $F(\varphi)$. In this situation we have $-F(\varphi)\cong F(\varphi^{-1})$.

\begin{prop}[Splitter]\label{prop:splitter}
There is a compact, oriented, smooth, aspherical $4$-manifold $\rm{Splitter}$ with  and $\chi=0$ and aspherical and $\pi_{1}$-injective boundary consisting of 
\[\partial \ {\rm{Splitter} }=  T^{2}(\Phi^{-1})\sqcup T^{2}(\tau)\sqcup T^{2}(\tau)\sqcup T^{2}(\tau)\]
where $\Phi$ is the monodromy map in Proposition \ref{prop:core} and $\tau$ denotes a positive Dehn twist about a non-separating simple closed curve. 
\end{prop}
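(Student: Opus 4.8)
The plan is to build the Splitter as a mapping torus construction iterated along a chain of surface-with-boundary pieces, so that the single "incoming" torus bundle $T^2(\Phi^{-1})$ gets replaced by three "outgoing" copies of $T^2(\tau)$, while the Euler characteristic contribution stays zero. Concretely, recall that $\Phi$ is the order-$4$ mapping class $\left(\begin{smallmatrix} 0 & 1 \\ -1 & 0\end{smallmatrix}\right)$, and the basic observation is that $\Phi^{-1}$ (equivalently $\Phi^3$) is isotopic to a product of three positive Dehn twists: indeed $\left(\begin{smallmatrix} 1 & 1 \\ 0 & 1 \end{smallmatrix}\right)\left(\begin{smallmatrix} 1 & 0 \\ -1 & 1 \end{smallmatrix}\right)\left(\begin{smallmatrix} 1 & 1 \\ 0 & 1 \end{smallmatrix}\right) = \left(\begin{smallmatrix} 0 & 1 \\ -1 & 0 \end{smallmatrix}\right)$, and each of the two distinct factors is conjugate in $SL_2(\mathbb Z)$ to the standard parabolic $\tau = \left(\begin{smallmatrix} 1 & 1 \\ 0 & 1\end{smallmatrix}\right)$. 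So I would first pin down a factorization $\Phi^{-1} = \tau_1\tau_2\tau_3$ (or a $\tau$-conjugate thereof) into three right-handed Dehn twists, each about a non-separating curve, reading off the matrices; the braid/lantern-type relations in the mapping class group of the torus make this elementary.

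Next I would realize this algebraic factorization geometrically. Take $P$ to be a pair of pants (three-holed sphere) thickened to $P\times T^2$, and on this $T^2$-bundle over $P$ one can prescribe the monodromy so that going around the "input" boundary circle of $P$ gives $\Phi$ while going around each of the two "output" circles gives a $\tau$ — but that only accounts for two output tori, and the flatness constraint forces the product of the boundary monodromies to be trivial, i.e. $\Phi \cdot \tau^{-1}\cdot \tau^{-1} = 1$, which is false. To get three $\tau$'s I instead use a four-holed sphere $Q$ (or, more transparently, chain together two pairs of pants), with one input boundary carrying $\Phi$ and three output boundaries each carrying $\tau$; the flatness/compatibility condition is exactly $\Phi\cdot\tau^{-1}\cdot\tau^{-1}\cdot\tau^{-1} = 1$ up to conjugation, which is precisely the factorization $\Phi^{-1}=\tau^3$ established in the first step (after absorbing the conjugating elements into how the curves are positioned). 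Then $\mathrm{Splitter} := Q\times_\rho T^2$, the $T^2$-bundle over $Q$ with this monodromy representation $\rho\colon \pi_1(Q)\to SL_2(\mathbb Z)=\mathrm{MCG}(T^2)$. Since $Q$ is a surface with boundary it is aspherical, $T^2$ is aspherical, and a $T^2$-bundle over an aspherical base is aspherical; $\chi(\mathrm{Splitter}) = \chi(Q)\chi(T^2) = \chi(Q)\cdot 0 = 0$ by the product-type formula for fiber bundles. Orientability follows because $SL_2(\mathbb Z)$ preserves the orientation of $T^2$ and $Q$ is orientable; smoothness is automatic from the bundle structure. The boundary is $\partial Q \times_\rho T^2 = T^2(\Phi)\sqcup T^2(\tau)\sqcup T^2(\tau)\sqcup T^2(\tau)$, and to get $T^2(\Phi^{-1})$ on the first component rather than $T^2(\Phi)$ I simply orient $\mathrm{Splitter}$ so that the induced boundary orientation reverses that factor, using $-T^2(\Phi)\cong T^2(\Phi^{-1})$ from the convention fixed just before the proposition.

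The remaining point is $\pi_1$-injectivity of the boundary. Each boundary piece $T^2(\psi)\hookrightarrow \mathrm{Splitter}$ is the restriction of the bundle over a boundary circle of $Q$ to the bundle over $Q$; since $\partial Q \hookrightarrow Q$ is $\pi_1$-injective (each boundary component of a surface-with-boundary injects on $\pi_1$) and $T^2$-bundles are functorial, the long exact sequences of the two bundles fit into a commutative ladder from which injectivity on $\pi_1$ of the total spaces follows by a diagram chase — the fiber maps by the identity and the base maps injectively. I expect this $\pi_1$-injectivity verification to be the most delicate part to write carefully, though conceptually routine; the genuine content of the proposition is the first step, the explicit factorization $\Phi^{-1}=\tau\tau\tau$ into three positive Dehn twists with the curves arranged so that the monodromy representation on a planar four-holed sphere actually closes up. Everything else — asphericity via Whitehead's theorem applied to the bundle decomposition over $Q$, vanishing of $\chi$, orientation bookkeeping to flip $T^2(\Phi)$ to $T^2(\Phi^{-1})$ — is then bookkeeping.
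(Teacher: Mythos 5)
Your construction is the same as the paper's: the Splitter is the $T^{2}$-bundle over the three-times punctured disk (your four-holed sphere $Q$) whose monodromy data comes from factoring $\Phi$ into three right-handed Dehn twists, and asphericity, $\chi=0$, orientability, and $\pi_{1}$-injectivity all follow from general bundle facts exactly as you indicate. But the sign bookkeeping, which is the real content of matching the boundary to the statement, contains a genuine error. The element that must be factored into three \emph{positive} twists is $\Phi$, not $\Phi^{-1}$: your displayed identity $\left(\begin{smallmatrix}1&1\\0&1\end{smallmatrix}\right)\left(\begin{smallmatrix}1&0\\-1&1\end{smallmatrix}\right)\left(\begin{smallmatrix}1&1\\0&1\end{smallmatrix}\right)=\left(\begin{smallmatrix}0&1\\-1&0\end{smallmatrix}\right)$ is the factorization $\tau_{a}\tau_{b}\tau_{a}=\Phi$ (this is the paper's lemma), whereas the factorization $\Phi^{-1}=\tau_{1}\tau_{2}\tau_{3}$ into positive non-separating twists that you announce does not exist: in the abelianization $SL_{2}(\mathbb{Z})^{\mathrm{ab}}\cong\mathbb{Z}/12$ all positive twists about non-separating curves map to the same generator $t$, so any product of three of them maps to $t^{3}$, while $\Phi^{-1}\mapsto t^{-3}=t^{9}$. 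Likewise your ``flatness condition'' $\Phi\cdot\tau^{-1}\tau^{-1}\tau^{-1}=1$ says $\Phi=\tau^{3}$, not $\Phi^{-1}=\tau^{3}$.

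The proposed repair at the end --- orient the total space so that the induced boundary orientation reverses only the $T^{2}(\Phi)$ component --- is not available: reversing the orientation of the total space reverses the induced orientation on \emph{all four} boundary components, which would turn the three $T^{2}(\tau)$'s into $T^{2}(\tau^{-1})$'s. The correct resolution, and the one the paper uses, is that no reorientation is needed: with each boundary circle of the punctured disk given its induced orientation, the outer boundary carries the \emph{inverse} of the composite of the puncture monodromies, so the bundle with puncture monodromies $\tau_{a},\tau_{b},\tau_{a}$ has boundary $T^{2}(\tau_{a})\sqcup T^{2}(\tau_{b})\sqcup T^{2}(\tau_{a})\sqcup -T^{2}(\Phi)$, and $-T^{2}(\Phi)\cong T^{2}(\Phi^{-1})$ by the convention fixed before the proposition. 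One last point you use implicitly: to write all three twist components as $T^{2}(\tau)$ for a single $\tau$ you need that a diffeomorphism of $T^{2}$ carrying one non-separating curve to another induces an orientation-preserving diffeomorphism $T^{2}(\tau_{a})\cong T^{2}(\tau_{b})$; the paper records this as a separate lemma. Your worry that $\pi_{1}$-injectivity is the delicate step is misplaced --- it is the routine fibration ladder argument you sketch --- whereas the orientation conventions above are where the proof can actually go wrong.
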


\begin{prop}[Connector]\label{prop:connector}
For any orientation-preserving diffeomorphism $\psi$ of the torus $T^{2}$ there is a compact, oriented, smooth, aspherical $4$-manifold $\rm{Conn}$ with $\chi=0$ and aspherical and $\pi_{1}$-injective boundary consisting of $$\partial\ {\rm {Conn}} =  T^{2}(\psi^{-1})\sqcup T^{2}(\psi^{-1})\sqcup T^{2}(\psi)\sqcup T^{2}(\psi)$$
\end{prop}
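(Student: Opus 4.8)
{\bf Proof proposal for the Connector proposition.}

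The plan is to build $\rm{Conn}$ from an explicit product piece and to exploit the fact that a torus bundle over the circle is itself a product with a circle after one reduces to the mapping class group. First I would recall that, since $\psi$ is an orientation-preserving diffeomorphism of $T^2$, the bundle $T^2(\psi)$ depends only on the conjugacy class of $[\psi]$ in $SL_2(\mathbb{Z})$, and in all cases $T^2(\psi)$ is an aspherical closed $3$-manifold that is $\pi_1$-injective as the boundary of a suitable $4$-dimensional filling; these facts are standard and are used implicitly already in Propositions \ref{prop:core} and \ref{prop:splitter}. The key construction is to take the product $W = T^2(\psi) \times F$, where $F$ is a compact oriented surface chosen so that $\chi(F) = 0$ and $\partial F$ has the right number of components; the natural candidate is $F = S^1 \times [0,1]$, an annulus, which has $\chi(F) = 0$ and two boundary circles. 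Then $\chi(W) = \chi(T^2(\psi)) \cdot \chi(F) = 0 \cdot 0 = 0$ automatically, and $\partial W = T^2(\psi) \times \partial F = T^2(\psi) \sqcup T^2(\psi)$. This already produces an aspherical $4$-manifold with $\chi = 0$ whose boundary is two copies of $T^2(\psi)$, but we need \emph{four} boundary components, two of them $T^2(\psi^{-1})$.

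The next step is to arrange the four-component boundary with the correct bundle monodromies. Here I would use the orientation-reversal trick recorded in the excerpt: $-T^2(\psi) \cong T^2(\psi^{-1})$. Concretely, I would take $F$ to be a compact oriented surface of genus $0$ with four boundary components (a ``$4$-holed sphere''), for which $\chi(F) = 2 - 4 = -2$; that is the wrong Euler characteristic. Instead, the right choice is a genus-one surface with... no: the cleanest route is to note that the desired boundary $T^2(\psi^{-1}) \sqcup T^2(\psi^{-1}) \sqcup T^2(\psi) \sqcup T^2(\psi)$ is exactly the boundary one gets by forming $T^2(\psi) \times P$ where $P$ is a \emph{pair of pants} (thrice-punctured sphere) only if we also allow one factor with reversed orientation; since $\chi(P) = -1$ this gives $\chi = 0$ but only three boundary tori. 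So the honest construction is: let $P$ be a pair of pants, set $W_0 = T^2(\psi) \times P$, so $\partial W_0$ is three copies of $T^2(\psi)$ with appropriate orientations, and then glue two such blocks $W_0$ along one boundary torus each via an orientation-reversing diffeomorphism of $T^2(\psi)$. The union $W_0 \cup_{T^2(\psi)} W_0'$ has, by the sum formula, $\chi = 0 + 0 - \chi(T^2(\psi)) = 0$, and its boundary consists of the four remaining tori: two copies of $T^2(\psi)$ and two copies of $-T^2(\psi) \cong T^2(\psi^{-1})$, exactly as required.

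To finish, I would verify the hypotheses needed to conclude asphericity and $\pi_1$-injectivity. The piece $W_0 = T^2(\psi) \times P$ is aspherical as a product of aspherical spaces, and each boundary component $T^2(\psi) \times (\text{boundary circle of } P)$ is $\pi_1$-injective in $W_0$ because $\pi_1$ of a boundary circle injects into $\pi_1(P)$ (the boundary of a surface is $\pi_1$-injective) and the $T^2(\psi)$ factor contributes a direct factor on both sides. Then Whitehead's theorem (stated above) applies to the gluing $W_0 \cup_{T^2(\psi)} W_0'$ along the $\pi_1$-injective aspherical torus bundle, giving that $\rm{Conn}$ is aspherical; and the remaining four boundary tori are $\pi_1$-injective in $\rm{Conn}$ by a van Kampen / retraction argument, since each sits in one $W_0$ piece $\pi_1$-injectively and the amalgamating subgroup is a retract in the relevant sense. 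Orientations are handled by choosing the product orientations on each $W_0$ and the orientation-reversing gluing map, so that the induced boundary orientations match the claimed bundles via $-T^2(\psi) \cong T^2(\psi^{-1})$. The main obstacle I anticipate is \emph{not} the Euler characteristic bookkeeping (which is forced) but rather making the $\pi_1$-injectivity of the final four boundary components genuinely rigorous: one must check that gluing two copies of $\pi_1(T^2(\psi)) \times \pi_1(P)$ along the $\pi_1(T^2(\psi))$ factor does not accidentally kill any peripheral loop, which requires knowing that $\pi_1(T^2(\psi))$ embeds in the amalgamated product and that each boundary-circle subgroup of $\pi_1(P)$ remains embedded — both true, but needing a clean statement of the relevant combination/retraction lemma.
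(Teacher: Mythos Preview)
Your construction has a fatal dimension error. The mapping torus $T^{2}(\psi)$ is a closed \emph{three}-manifold, so the product $W_{0}=T^{2}(\psi)\times P$ with a pair of pants $P$ is five-dimensional, not four-dimensional; likewise $T^{2}(\psi)\times(S^{1}\times[0,1])$ is five-dimensional. Your boundary computation ``$\partial W=T^{2}(\psi)\times\partial F=T^{2}(\psi)\sqcup T^{2}(\psi)$'' silently treats the boundary circles of $F$ as points, which is where the error becomes visible: in fact $T^{2}(\psi)\times\partial P$ is three copies of the four-manifold $T^{2}(\psi)\times S^{1}$. No amount of gluing of such five-dimensional blocks will produce the required four-manifold.

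The paper's construction is different in kind: rather than taking a product with the three-manifold $T^{2}(\psi)$, one builds a \emph{$T^{2}$-bundle} (two-dimensional fiber) over the three-times punctured disk, using the relation $\psi\,\psi^{-1}\,\psi\,\psi^{-1}=\mathrm{id}$ to prescribe the monodromies around the four boundary circles. The total space is then genuinely four-dimensional, with $\chi=\chi(T^{2})\cdot\chi(\text{4-holed sphere})=0$, aspherical because base and fiber are aspherical, and with $\pi_{1}$-injective boundary components $T^{2}(\psi)$, $T^{2}(\psi^{-1})$, $T^{2}(\psi)$, $T^{2}(\psi^{-1})$ as required. The moral is that the nontrivial monodromy $\psi$ must live in the bundle structure over the punctured disk, not be baked into a three-dimensional factor of a product.
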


\begin{prop}[Cap]\label{prop:cap}
There is a compact, oriented, smooth, aspherical $4$-manifold $\rm{Cap}$ with $\chi=4$ and with aspherical and $\pi_{1}$-injective boundary consisting of $T^{2}(\tau)$,
where $\tau$ denotes a positive Dehn twist about a non-separating simple closed curve. \end{prop}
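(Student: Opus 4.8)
To begin, I would identify the boundary geometrically: $T^{2}(\tau)$ is the Nil-geometry $3$-manifold fibering over the circle with torus fiber and monodromy a single Dehn twist; equivalently it is the circle bundle over $T^{2}$ with Euler number $\pm 1$, and its fundamental group is the integral Heisenberg group, in which the circle-bundle fiber generates the infinite cyclic center. The naive filling --- the $2$-disk bundle over $T^{2}$ with Euler number $\pm 1$ --- is aspherical with $\chi=0$, but the fiber circle bounds a meridian disk in it, so its boundary is \emph{not} $\pi_{1}$-injective. Hence any admissible filling must have fundamental group containing the Heisenberg group, and, being aspherical, is necessarily ``large''; the value $\chi=4$ is what the construction below costs.

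The plan is to build $\mathrm{Cap}$ in the spirit of $\mathrm{Core}$: take a product of two surfaces with boundary whose product Euler characteristic is already $4$, and then partially identify pieces of its boundary by an orientation-preserving diffeomorphism so that the resulting boundary $3$-manifold is precisely $T^{2}(\tau)$. Writing $\Sigma_{g,b}$ for the genus-$g$ surface with $b$ boundary circles, convenient choices are $\Sigma_{1,2}\times\Sigma_{1,2}$ or $\Sigma_{0,4}\times\Sigma_{0,4}$, each with $\chi=(-2)(-2)=4$. Since identifications along codimension-one pieces of the boundary leave the Euler characteristic unchanged, $\chi(\mathrm{Cap})=4$ automatically. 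The Dehn-twist monodromy has to be produced by the gluing map and not by a bundle monodromy: $T^{2}(\tau)$ cannot bound a torus bundle over a surface with connected boundary, because $\tau$ maps to a generator of $SL_{2}(\mathbb{Z})^{\mathrm{ab}}\cong\mathbb{Z}/12$ and so is not a product of commutators. Concretely, the boundary of a surface product is a graph manifold assembled from pieces of the form $S^{1}\times(\text{surface with boundary})$ glued along tori, and I would choose the identifying diffeomorphism so as to fold this graph manifold down to a single Nil piece, with the required shear appearing on the relevant tori as the identification is performed.

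Asphericity of $\mathrm{Cap}$ would follow from J.H.C.\ Whitehead's theorem in the form recorded after its statement above: $\mathrm{Cap}$ is obtained from the aspherical product by identifying two subspaces, which one must check are aspherical and $\pi_{1}$-injective, so that the identification is realized up to homotopy by attaching a mapping cylinder $Y\times I$ along $\pi_{1}$-injective subcomplexes. The remaining point is that $T^{2}(\tau)$ be $\pi_{1}$-injective in $\mathrm{Cap}$; since both spaces are aspherical this is an injectivity statement for the induced homomorphism of fundamental groups, which should be readable off from the combinatorics of the product and the identification.

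I expect the main obstacle to be exactly this $\pi_{1}$-injectivity, coordinated with the requirement that the boundary come out to be \emph{precisely} the Nil-manifold $T^{2}(\tau)$. The element at risk is the central fiber class of the Heisenberg group --- the very class that dies in the disk-bundle filling --- so the identifying diffeomorphism must be chosen carefully enough that no relation killing it is introduced, while still producing the correct $3$-manifold, respecting orientations, and yielding an honest smooth manifold (no corners) along the identified boundary pieces. One could alternatively hope to realize $\mathrm{Cap}$ by truncating a cusp of a finite-volume complex-hyperbolic surface, whose cusp cross-sections are Heisenberg nilmanifolds, but controlling the number and type of cusps and the value of $\chi$ looks harder, and such manifolds are not Haken, which runs against the spirit of the paper.
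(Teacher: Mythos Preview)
Your proposal is a plan rather than a proof: you identify a target (a product of two $\chi=-2$ surfaces, partially self-glued along boundary pieces) and correctly anticipate the obstacles, but you never specify the identification map, never verify that the resulting boundary is $T^{2}(\tau)$, and never check $\pi_{1}$-injectivity. The sentence ``I would choose the identifying diffeomorphism so as to fold this graph manifold down to a single Nil piece'' is exactly the content of the proposition, and it is not supplied. In the $\mathrm{Core}$ construction the boundary monodromy is forced to be $(x,y)\mapsto(y,\overline{x})$, a finite-order map; it is not at all clear that a fold-and-glue on $\Sigma_{1,2}\times\Sigma_{1,2}$ or $\Sigma_{0,4}\times\Sigma_{0,4}$ can be arranged to produce the infinite-order shear $\tau$ while keeping the Heisenberg center alive. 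Your own obstruction---that $\tau$ is not a commutator product in $SL_{2}(\mathbb{Z})$---already hints that some genuinely new input is needed beyond symmetric identifications.

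The paper takes a completely different route and supplies that input. It works first over a genus-$3$ fiber, where the \emph{lantern relation} expresses a single positive Dehn twist along a non-separating curve as a word of weight one (four positive and three negative twists along non-separating curves). This yields an $F_{3}$-bundle over a seven-times punctured sphere with $\chi=20$ whose boundary components can be paired off save one. The base is then cut into five pairs of pants; on four of them the \emph{Foozwell--Rubinstein torus trick} replaces an $F_{3}(\varphi_{i})$ boundary by a $T^{2}(\psi)$ boundary at a cost of $-(2h-2)=-4$ in Euler characteristic each time, bringing $\chi$ down to $4$. A Haken cobordism (built from a handlebody with an incompressible-complement core curve) then converts the remaining genus-$3$ boundary pieces to torus bundles at no cost in $\chi$, and the torus-bundle boundaries are paired off to leave a single $T^{2}(\tau)$. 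Asphericity and $\pi_{1}$-injectivity come for free because every piece is Haken. The lantern relation is not incidental decoration here: its odd length is precisely what circumvents the abelianization obstruction you noticed, and is the missing idea in your sketch.
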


{\color{black}This is the most difficult step and uses a variation of a construction of Foozwell and Rubinstein \cite{FoozwellRubinstein2016}. It seems to require working with surfaces of genus greater than 2. We need to modify some details in order to squeeze out as small an Euler characteristic as possible. It would be quite interesting if one could reduce the Euler characteristic to $0$ in this construction.}

Reversing orientation one also has an object  with boundary  $T^{2}(\tau^{-1})$.

\paragraph{A warm-up construction}
To build a manifold with  $\chi=13$, take one copy of Core, one Splitter, and three copies of $\text{Cap}$ and glue them together appropriately, as indicated schematically in Figure \ref{fig:assembly}.
\begin{figure}[htbp]
\begin{tikzpicture}[scale=0.8]

\draw [ line width=1pt]
(2,5)
 .. controls ++(270:5) and ++(270:5) .. 
(8,5)
;
\draw [ line width=1pt]
(2,5)
 .. controls ++(270:1) and ++(270:1) .. 
(8,5)
;

\draw [dashed,  line width=1pt]
(2,5)
 .. controls ++(90:1) and ++(90:1) .. 
(8,5)
;

  \draw [ line width=1pt]
(2,5)
 .. controls ++(90:1) and ++(270:1) .. 
  (0,8.75) 
  ;

 \draw [ line width=1pt]
(8,5)
 .. controls ++(90:1) and ++(270:1) .. 
  (10,8.75) 
  ;
 \draw [ line width=1pt]
(0,8.75)
 .. controls ++(270:1) and ++(270:1) .. 
  (2,8.75) 
  ;
 \draw [dashed, line width=1pt]
(0,8.75)
 .. controls ++(90:1) and ++(90:1) .. 
  (2,8.75) 
  ;
 \draw [ line width=1pt]
(8,8.75)
 .. controls ++(270:1) and ++(270:1) .. 
  (10,8.75) 
  ;
 \draw [dashed, line width=1pt]
(8,8.75)
 .. controls ++(90:1) and ++(90:1) .. 
  (10,8.75) 
  ;
 \draw [ line width=1pt]
(4,8.75)
 .. controls ++(270:1) and ++(270:1) .. 
  (6,8.75) 
  ;
 \draw [dashed, line width=1pt]
(4,8.75)
 .. controls ++(90:1) and ++(90:1) .. 
  (6,8.75) 
  ;
   \draw [ line width=1pt]
(2,8.75)
 .. controls ++(270:1) and ++(270:1) .. 
  (4,8.75) 
  ;
 \draw [ line width=1pt]
(6,8.75)
 .. controls ++(270:1) and ++(270:1) .. 
  (8,8.75) 
  ;
 \draw [ line width=1pt]
(0,8.75)
 .. controls ++(90:4) and ++(90:4) .. 
  (2,8.75) 
  ;
 \draw [ line width=1pt]
(4,8.75)
 .. controls ++(90:4) and ++(90:4) .. 
  (6,8.75) 
  ;
   \draw [ line width=1pt]
(8,8.75)
 .. controls ++(90:4) and ++(90:4) .. 
  (10,8.75) 
  ;

\node at (5,3) {\large Core};  
\node at (5,2.4) {\large $\chi=1$};
\node at (5,7) {\large Splitter};  
\node at (5,6.4) {\large $\chi=0$};
\node at (1,10.5) {\large Cap};  
\node at (1,9.9) {\large $\chi=4$};
\node at (5,10.5) {\large Cap};  
\node at (5,9.9) {\large $\chi=4$};
\node at (9,10.5) {\large Cap};  
\node at (9,9.9) {\large $\chi=4$};

\end{tikzpicture}

\caption{Schematic of basic building blocks
}
\label{fig:assembly}
\end{figure}
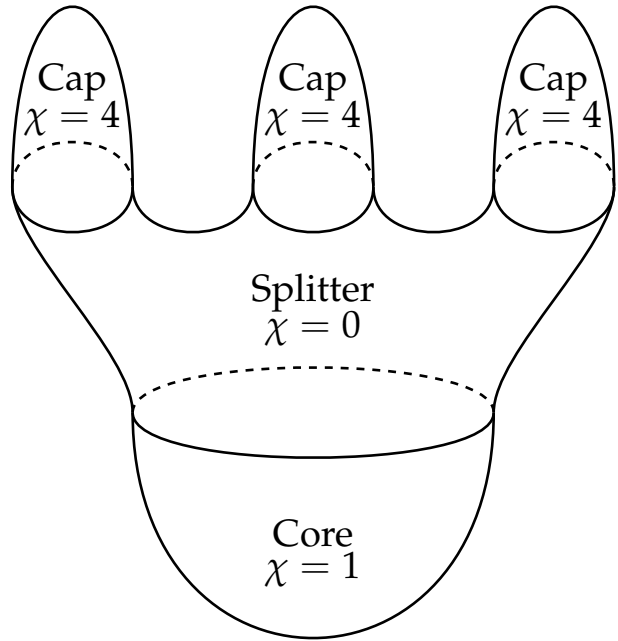

Assuming the preceding propositions, we can now prove the main theorem stated in the introduction.

\begin{thm}[Main Theorem]
For any integers  $k \ge 0$, there is a closed, oriented, aspherical, smooth $4$-manifold $X$ with 
\[
 \chi(X) = 
13 + 2k.
\]
\end{thm}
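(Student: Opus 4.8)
The plan is to assemble $X$ from copies of the four building blocks of Propositions~\ref{prop:core}--\ref{prop:cap}, glued along boundary components, and to control only two quantities as we proceed: the Euler characteristic, which is additive under each such gluing since the separating $3$-manifold is closed and so has $\chi = 0$; and asphericity, which survives each gluing by Whitehead's theorem, because every piece and every gluing $3$-manifold (a torus bundle over the circle) is aspherical and $\pi_1$-injective. The only combinatorial rule needed is the matching rule coming from the stated convention $-T^2(\varphi)\cong T^2(\varphi^{-1})$: a boundary face of type $T^2(\varphi)$ can be glued, by an orientation-reversing diffeomorphism, to a face of type $T^2(\varphi^{-1})$, and conversely. (Thus $\mathrm{Core}$ fits $\mathrm{Splitter}$ along $T^2(\Phi)$--$T^2(\Phi^{-1})$, an orientation-reversed $\mathrm{Cap}$ has boundary $T^2(\tau^{-1})$ and so caps off a $T^2(\tau)$ face, and so forth.)

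The heart of the construction is a single move that raises $\chi$ by exactly $2$. Suppose $W$ is a compact, oriented, smooth, aspherical $4$-manifold with $\pi_1$-injective boundary, one component of which is a copy of $T^2(\Phi)$. Take the Connector of Proposition~\ref{prop:connector} with $\psi=\Phi$, whose boundary is $T^2(\Phi^{-1})\sqcup T^2(\Phi^{-1})\sqcup T^2(\Phi)\sqcup T^2(\Phi)$, and glue $W$ to it along the distinguished $T^2(\Phi)$ face and one of the Connector's $T^2(\Phi^{-1})$ faces. To the Connector's remaining $T^2(\Phi^{-1})$ face glue a copy of $\mathrm{Core}$, and to one of its two remaining $T^2(\Phi)$ faces glue an orientation-reversed copy of $\mathrm{Core}$ (which has boundary $T^2(\Phi^{-1})$). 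Call the result $W'$. By three applications of Whitehead's theorem, $W'$ is again compact, oriented, smooth, aspherical with $\pi_1$-injective boundary ($\pi_1$-injectivity of the surviving boundary faces is automatic in an amalgamated product), it still has one boundary component equal to $T^2(\Phi)$, and by additivity $\chi(W') = \chi(W) + 0 + 1 + 1 = \chi(W) + 2$.

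To prove the theorem I would start with $W_0 = \mathrm{Core}$: compact, oriented, smooth, aspherical, $\chi = 1$, with boundary a single $T^2(\Phi)$. Applying the move $k$ times produces $W_k$ with the same properties and $\chi(W_k) = 1 + 2k$. Then glue $\mathrm{Splitter}$ along the $T^2(\Phi)$ face of $W_k$ and the $T^2(\Phi^{-1})$ face of $\mathrm{Splitter}$; the result has boundary $T^2(\tau)\sqcup T^2(\tau)\sqcup T^2(\tau)$ and still $\chi = 1 + 2k$. Finally, cap each of the three $T^2(\tau)$ faces with an orientation-reversed copy of $\mathrm{Cap}$. The outcome $X$ is a closed, oriented, smooth, aspherical $4$-manifold with
\[
\chi(X) = \chi(W_k) + \chi(\mathrm{Splitter}) + 3\,\chi(\mathrm{Cap}) = (1 + 2k) + 0 + 3\cdot 4 = 13 + 2k,
\]
as required; for $k = 0$ this is precisely the warm-up assembly $\mathrm{Core}\cup\mathrm{Splitter}\cup 3\,\mathrm{Cap}$ of Figure~\ref{fig:assembly}.

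Granting Propositions~\ref{prop:core}--\ref{prop:cap}, the argument above is almost entirely bookkeeping --- matching monodromies, invoking additivity of $\chi$ and Whitehead's theorem, and checking that the required orientation-reversing diffeomorphisms of boundary tori exist --- and I do not expect any genuine obstacle in it. I expect the difficulty to lie in the Propositions themselves, and overwhelmingly in Proposition~\ref{prop:cap}: producing an aspherical $4$-manifold with $\pi_1$-injective boundary $T^2(\tau)$ and $\chi$ as small as $4$ calls for the promised delicate modification of the Foozwell--Rubinstein construction using surfaces of genus greater than $2$. By comparison $\mathrm{Core}$, $\mathrm{Splitter}$, and $\mathrm{Conn}$ should be more hands-on (products of punctured surfaces with partial boundary identifications, and related bundle constructions), though still requiring care to verify asphericity and $\pi_1$-injectivity of every boundary face.
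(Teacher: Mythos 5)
Your proposal is correct, and it rests on exactly the same foundation as the paper's proof: additivity of $\chi$ across closed $3$-manifold gluing loci, Whitehead's theorem for asphericity, and the four building blocks Core, Splitter, Connector, Cap. The combinatorial assembly, however, is arranged differently. The paper takes $k+1$ copies of $\mathrm{Core}$, $k$ copies of $-\mathrm{Core}$, and attaches a Splitter to \emph{each} of them ($2k+1$ Splitters in all), converting every $T^2(\Phi^{\pm1})$ boundary into three $T^2(\tau^{\pm1})$'s; it then links the resulting $2k+1$ components with $2k$ Connectors (taken with $\psi=\tau$), self-identifies $3k$ pairs of $T^2(\tau)$ and $T^2(\tau^{-1})$ boundary components, and caps the remaining three with Caps. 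You instead do all the connecting upstream, at the level of the $T^2(\Phi^{\pm1})$ boundaries, via an inductive ``$+2$ move'' using a Connector with $\psi=\Phi$ together with one $\mathrm{Core}$ and one $-\mathrm{Core}$, so that only a single Splitter and $k$ Connectors are ever needed. Your version is the more economical and arguably cleaner bookkeeping (it avoids the paper's somewhat vague ``glue them together using $2k$ connectors as needed'' and the self-identification step, and your observation that $\pi_1$-injectivity of surviving boundary faces persists through the amalgamations is correct); both reduce to the same essential content, namely Propositions~\ref{prop:core}--\ref{prop:cap}, and you are right that Proposition~\ref{prop:cap} is where the real work of the paper lies.
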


\begin{proof}
Start with $k+1$ copies of $\text{Core}$ and $k$ copies of $-\text{Core}$  and $2k+1$ $\text{Splitters}$. Attach one splitter to each Core or $-\text{Core}$ in such a way as to create an orientable $4$-manifold. Then glue them together using $2k$ connectors $\text{Conn}$ as needed, to create a compact, oriented, smooth, aspherical $4$-manifold with  $\chi=2k+1$, with boundary consisting of $3(k+1)$ copies of $T^{2}(\tau)$ and $3k$ copies of $T^{2}(\tau^{-1})$, where $\tau$ is a nontrivial positive Dehn twist. 

Identify the $3k$ copies of $T^{2}(\tau^{-1})$ with $3k$ of the copies of $T^{2}(\tau)$. 

Finally, cap off the remaining $3$ boundary components with $3$ copies of $\text{Cap}$. The result is a closed, oriented, aspherical, smooth $4$-manifold $X$ with 
\[
 \chi(X) = 
1+k +k +3\times 4 =13+2k\]
\end{proof}

\section{Core block}
\label{sec:core}
We now begin the process of constructing all the pieces.
\begin{thm}\label{thm:one}
There is a compact, oriented, aspherical, smooth $4$-manifold $V^{4}$ such that  $\chi(V^{4})=1$ and $\sigma(V^{4})= 1$ having aspherical and $\pi_{1}$-injective boundary $\partial V^{4}$ diffeomorphic to the torus bundle over the circle with monodromy 
\(
\left(\begin{array}{rr}0 & 1 \\-1&0 \end{array}\right).
\)
\end{thm}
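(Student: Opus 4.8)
The plan is to build $V^{4}$ from the product of two once-punctured tori by identifying a pair of boundary hypersurfaces. Let $M$ denote the compact once-punctured torus, so $\partial M\cong S^{1}$, $\pi_{1}(M)\cong F_{2}$ with $[\partial M]$ not a proper power, and $\chi(M)=-1$. Set $N=M\times M$: a compact oriented aspherical smooth $4$-manifold with corners, $\chi(N)=\chi(M)^{2}=1$, and $\sigma(N)=0$ (for instance because $M$, hence $N$, carries an orientation-reversing self-diffeomorphism). After rounding the corner, its boundary is the closed $3$-manifold
\[
\partial N \;=\; A\ \cup\ (T^{2}\times[-1,1])\ \cup\ B,
\qquad A=\partial M\times M,\quad B=M\times\partial M ,
\]
where $T^{2}=\partial M\times\partial M$ and the middle piece is a collar joining the two ``sides'' $A\cong S^{1}\times M\cong B$. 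I would form $V$ by gluing $A$ to $B$ via a diffeomorphism $g$ that interchanges the two factors and reverses one circle direction, say $g(x,y)=(\beta(y),\alpha(x))$ with $\alpha$ an orientation-reversing self-diffeomorphism of $\partial M\cong S^{1}$ and $\beta$ an orientation-preserving self-diffeomorphism of $M$.

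The three conclusions then come out as follows. Since $A\cong S^{1}\times M$ has $\chi=0$, the sum formula gives $\chi(V)=\chi(N)-\chi(A)=1$. The manifold $V$ is aspherical by J.H.C.\ Whitehead's theorem, applied in the self-identification form noted immediately after its statement: $A\cong B\cong S^{1}\times M$ is aspherical, and it is $\pi_{1}$-injective in $N$ because the inclusion $\mathbb{Z}\times F_{2}\hookrightarrow F_{2}\times F_{2}$ is the product of the monomorphism $\mathbb{Z}\cong\langle[\partial M]\rangle\hookrightarrow F_{2}$ with an isomorphism. The two sides are used up, so the collar $T^{2}\times[-1,1]$ survives as $\partial V$, now with its ends identified through $g$; computing the induced automorphism of $H_{1}(T^{2})$ in the basis given by the two boundary circles shows this monodromy is, up to isotopy, $\Phi$, so $\partial V\cong T^{2}(\Phi)$. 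Finally $\pi_{1}(\partial V)\hookrightarrow\pi_{1}(V)$: van Kampen presents $\pi_{1}(V)$ as the HNN extension of $F_{2}\times F_{2}$ over $\mathbb{Z}\times F_{2}$ determined by $g$, and since that HNN extension admits a surjection onto $\mathbb{Z}$ killing the base, Britton's lemma shows the evident homomorphism $\pi_{1}(T^{2}(\Phi))\to\pi_{1}(V)$ is injective.

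For the signature I would compute $H_{*}(V)$ from the Mayer--Vietoris sequence of this HNN extension (using that $V$ is a $K(\pi_{1}(V),1)$). Both maps induced on $H_{2}$ by the two inclusions of $A$ into $N$ vanish --- each kills the class of $\partial M$ --- so $H_{2}(V)\cong H_{2}(N)\oplus\mathbb{Z}\cong\mathbb{Z}^{5}$, with $H_{1}(V)\cong\mathbb{Z}^{3}$ and $H_{3}(V)\cong\mathbb{Z}^{2}$, consistent with $\chi(V)=1$. The $H_{2}(N)$ summand is spanned by the product tori $a_{i}\times b_{j}$, which stay disjointly embedded in $V$ and so carry the form $\pm J\otimes J$, of signature $0$, where $J$ is the symplectic form on $H_{1}(M)$. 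The extra generator $\gamma$ is represented by the closed genus-two surface $(M\times\{q\})\cup(\{p\}\times M)$, a vertical slice and a horizontal slice glued along their boundary circles (which $g$ identifies); one finds $\gamma\cdot(a_{i}\times b_{j})=0$ and, from the normal data, $\gamma\cdot\gamma=\pm2$. Hence the intersection form of $V$ is $(\pm J\otimes J)\oplus\langle\pm2\rangle$, of determinant $\pm2$ --- matching the $\mathbb{Z}/2$ in $H_{1}(\partial V)$ --- and of signature $\pm1$; with $V$ oriented so that $\gamma\cdot\gamma=+2$ this gives $\sigma(V)=1$.

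The step I expect to be genuinely delicate is arranging the gluing $g$ to do three things at once: to be smooth across the former corner --- which is exactly why the middle piece must be a product collar $T^{2}\times[-1,1]$, so that gluing its ends yields a \emph{mapping torus} rather than a folded quadrant with the wrong cone angle; to be orientation-reversing on the $3$-manifold $A$, so that $V$ is orientable; and to induce \emph{precisely} $\Phi$ --- not some $\mathrm{GL}_{2}(\mathbb{Z})$-conjugate with the wrong determinant, nor $\Phi^{-1}$ --- on the corner torus. Fixing the signs in $g$ to meet all three constraints simultaneously, and confirming that the resulting orientation is the one for which $\gamma\cdot\gamma=+2$, is where the bookkeeping must be done carefully.
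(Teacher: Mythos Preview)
Your construction is essentially identical to the paper's: form $M\times M$ with $M$ the once-punctured torus, and self-identify the two ``sides'' $\partial M\times M$ and $M\times\partial M$ by a factor swap composed with one orientation reversal, so that the surviving boundary is the mapping torus $T^{2}(\Phi)$; the paper then invokes the product/sum formulas for $\chi$, Whitehead's theorem for asphericity, and van Kampen for $\pi_1$-injectivity, exactly as you do. The one substantive difference is that you actually carry out the signature computation via Mayer--Vietoris and the explicit intersection form $(\pm J\otimes J)\oplus\langle\pm 2\rangle$, whereas the paper's proof of this theorem omits the signature entirely---$\sigma=1$ is only asserted later in the ``Comments on signatures'' section with the remark that ``further details are omitted''---so on that point your proposal supplies more than the paper does.
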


\begin{proof}
Let $M$ denote the once-punctured $2$-torus with boundary $\partial M =S^{1}$, let $N$ be a small  collar neighborhood of $\partial M$, and set $M'=\overline{M\setminus N}$. Of course $M'\cong M$. Then $M$ and $M'$ are aspherical with $\pi_{1}$-injective, aspherical boundary. Consider $Z= \overline{ M\times M \setminus N\times N}$. Now $\partial Z$ consists of $S^{1}\times M'$ and $M'\times S^{1}$ separated by a tube which we identify with $\partial M\times\partial M \times I$. Each of these three pieces is aspherical and $\pi_{1}$-injective. (The full boundary of $M\times M$, or of $Z$, is not $\pi_{1}$-injective.)

We would like to identify the two pieces $\partial M\times M'$ and $M'\times \partial M$. The simplest way to do that would be via the interchange homeomorphism given by $(x,y)\mapsto (y,x)$. But this map is orientation-preserving and we need an orientation-reversing identification in order that the resulting identification space admits a well-defined orientation. To this end we use an identification of the form $\varphi:(x,y)\mapsto ({y},\overline{x})$, where $u\mapsto \overline{u}$ is an orientation-reversing involution of $M$ inducing a standard involution on $S^{1}$ with two fixed points. On the torus we may use $\overline{(z,w)}:=(w,z)$ and then delete the interior of an invariant disk neighborhood of a fixed point. 

We let $V=Z/\!\!\sim$ denote the resulting identification space. Now $\chi(Z)=1$ and $\chi(S^{1}\times M')=\chi(M'\times S^{1})=0$ by the product formula and therefore $\chi(V^{4})=1$ by the sum formula.

By Whitehead's theorem $V^{4}$ is aspherical. It follows from van Kampen's theorem that $\pi_{1}(\partial V)\to \pi_{1}(V)$ is injective.

It follows that $\partial V^{4}$ arises from identifying the two ends of $\partial M\times \partial M \times I$ via the map $\varphi$, which on homology is given by the matrix \(
\left(\begin{array}{rr}0 & 1 \\-1&0 \end{array}\right)
\), with respect to a suitable basis. We write $\partial V=T^{2}(\Phi)$, where $\Phi$ is the homeomorphism induced by this matrix.

\end{proof}

\section{Surface bundles}
The Splitters and Connectors both arise as torus bundles over a punctured disk. The Caps will be built by altering certain surface bundles, with fibers of genus 3.
\subsection{Basic properties}
Now a bundle over the circle $S^{1}$ with fiber $F$ is determined up to bundle isomorphism by its monodromy map $f:F\to F$.

Next we move to bundles with fiber $F$ over a punctured disk. Over each boundary curve one has a bundle over the circle with fiber $F$. The punctured disk is oriented as a submanifold of the Euclidean plane with a standard orientation and boundary curves are given the  orientation induced from that of the punctured disk.

Such a bundle is determined by its monodromy around the punctures, with the monodromy around the outer boundary being the inverse of the composition of the monodromies around the punctures, appropriately ordered.

\begin{lemma}
If $\varphi_{1}, \dots, \varphi_{n}$ are homeomorphisms of a closed oriented surface $F$ such that the composition $\varphi_{n}\dots \varphi_{1}=\text{ id}$, then there is a bundle over the $n$-times puncture $2$-sphere with fiber $F$ and monodromy around the punctures given by  $\varphi_{1}, \dots, \varphi_{n}$.
\end{lemma}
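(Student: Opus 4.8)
The plan is to realize the prescribed monodromy data by gluing mapping tori along a single fiber, exploiting that an $n$-times punctured sphere is homotopy equivalent to a wedge of $n-1$ circles. First I would model the $n$-times punctured $2$-sphere as the interior of a compact planar surface $P$ — a $2$-disk with $n-1$ open subdisks deleted — whose boundary consists of an outer curve $C_{0}$ together with inner curves $C_{1},\dots,C_{n-1}$, oriented as the boundary of $P\subset\mathbb{R}^{2}$; here $C_{i}$ ($1\le i\le n-1$) corresponds to the $i$-th puncture and $C_{0}$ to the $n$-th. By the structural fact recalled just before the lemma, it then suffices to produce \emph{any} $F$-bundle over $P$ whose monodromy around $C_{i}$ is $\varphi_{i}$ for $1\le i\le n-1$: its monodromy around $C_{0}$ is automatically the inverse of the appropriately ordered composition $\varphi_{n-1}\cdots\varphi_{1}$, which the hypothesis $\varphi_{n}\cdots\varphi_{1}=\mathrm{id}$ identifies with $\varphi_{n}$ (relabeling the inner curves if necessary so the cyclic order of the factors matches).

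To build such a bundle, note that $P$ deformation retracts onto a wedge $W=\bigvee_{i=1}^{n-1}S^{1}_{i}$ of circles based at a point $p$, with $S^{1}_{i}$ freely homotopic in $P$ to $C_{i}$. For each $i$ I would form the mapping torus $F(\varphi_{i})=(F\times[0,1])/((x,0)\sim(\varphi_{i}(x),1))$, which is an $F$-bundle over $S^{1}_{i}$ with monodromy $\varphi_{i}$ and contains a distinguished fiber over $p$, and glue the $n-1$ of them along these common fibers to obtain an $F$-bundle $E_{W}\to W$. Choosing a homotopy inverse $r:P\to W$ of the inclusion, set $E_{P}=r^{*}E_{W}$. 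Since surfaces are paracompact, the isomorphism type of a pulled-back fiber bundle depends only on the homotopy class of the base map, so the monodromy of $E_{P}$ around a loop $\gamma$ in $P$ equals that of $E_{W}$ around $r\circ\gamma$; in particular the monodromy of $E_{P}$ around $C_{i}$ is $\varphi_{i}$ for $1\le i\le n-1$. Restricting $E_{P}$ to $\mathrm{int}(P)$, which is the $n$-times punctured sphere, yields the desired bundle.

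Every step here is routine; the actual content is the standard dictionary, valid for a $K(\pi,1)$ with $\pi$ free, between $F$-bundles and representations $\pi\to\pi_{0}\,\mathrm{Homeo}(F)$, together with the mapping-torus construction over a circle. The only point that needs genuine care — and the thing I expect to be the main (if minor) obstacle — is the bookkeeping of orientations and orderings: one must orient $P$, its boundary curves, and the retraction consistently with the conventions fixed just before the lemma, so that the peripheral relation in $\pi_{1}$ of the punctured sphere and the hypothesis $\varphi_{n}\cdots\varphi_{1}=\mathrm{id}$ take literally the same form, including the cyclic order of the factors. I would pin this down by fixing once and for all an identification of $\pi_{1}$ of the punctured sphere with the free group on loops around the first $n-1$ punctures and reading off the $n$-th peripheral element from that presentation.
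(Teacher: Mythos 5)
Your argument is correct and is exactly the standard construction (mapping tori glued over a wedge of circles, pulled back to the planar surface, with the outer boundary monodromy forced to be $\varphi_{n}$ by the relation); the paper in fact gives no proof of this lemma at all, treating it as an immediate consequence of the preceding classification remark, so you have simply supplied the argument it implicitly relies on. The one caveat worth keeping in mind is that monodromy around a puncture is only well defined up to conjugation and isotopy, which is the level of precision the paper uses later (e.g.\ the lantern relation holds only up to isotopy), so your bookkeeping concern resolves itself at that level.
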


\begin{prop}\label{prop:genus3bundle}
If $\varphi$ is a positive Dehn twist about a non-separating simple closed curve in a closed, oriented surface $F_{h}$ of genus $h\ge 3$, then the $F_{h}$ bundle $F_{h}(\varphi)$ bounds an $F_{h}$ bundle over the once-punctured surface of genus $3$. The total space of the bundle has Euler characteristic $\chi=10(h-1)$.
\end{prop}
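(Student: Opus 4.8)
The statement is really a fact about the mapping class group in disguise: an $F_h$-bundle over a once-punctured surface of genus $g$ with prescribed boundary monodromy exists exactly when that monodromy is a product of $g$ commutators in $\mathrm{MCG}(F_h)$, so the plan is to display a positive Dehn twist about a non-separating curve as a product of three commutators. To make the reduction precise, let $S_g$ be the compact oriented surface of genus $g$ with one boundary circle; its fundamental group is free on $a_1,b_1,\dots,a_g,b_g$, and under the inclusion into the closed surface it is the loop $\prod_{i=1}^g[a_i,b_i]$ that represents $\partial S_g$. Suppose $\varphi$ in the orientation-preserving mapping class group $\mathrm{MCG}(F_h)$ is a product of $g$ commutators, $\varphi=\prod_{i=1}^g[r_i,s_i]$. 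Since $\pi_1(S_g)$ is free, the homomorphism $a_i\mapsto r_i,\ b_i\mapsto s_i$ lifts along the projection $\mathrm{Diff}^+(F_h)\to\mathrm{MCG}(F_h)$, and the associated flat $F_h$-bundle $E\to S_g$ is a compact oriented smooth $4$-manifold whose restriction over the boundary circle is the mapping torus of a diffeomorphism representing $\varphi$; hence $\partial E\cong F_h(\varphi)$ (or $F_h(\varphi^{-1})$ after reversing the orientation of $E$, by the convention $-F(\varphi)\cong F(\varphi^{-1})$ recorded above). By the product formula $\chi(E)=\chi(F_h)\,\chi(S_g)$, and for $g=3$ this equals $(2-2h)(-5)=10(h-1)$, the asserted value.

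It therefore remains to show that the monodromy at hand, a positive Dehn twist $\varphi=t_c$ about a non-separating simple closed curve, is a product of three commutators in $\mathrm{MCG}(F_h)$ for $h\ge 3$. As all non-separating curves lie in a single $\mathrm{MCG}(F_h)$-orbit and being a product of three commutators is conjugation-invariant, it suffices to treat one convenient curve. I would embed a four-holed sphere $P\subset F_h$ carrying the lantern relation $t_{\delta_1}t_{\delta_2}t_{\delta_3}t_{\delta_4}=t_\alpha t_\beta t_\gamma$, with $\delta_1,\dots,\delta_4$ the boundary curves of $P$ and $\alpha,\beta,\gamma$ its three interior curves, chosen so that all seven of these curves are non-separating in $F_h$. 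Concretely, realize $F_h$ as $P\cup P''$ glued along $\delta_1,\dots,\delta_4$, with $P''$ a connected genus-$(h-3)$ surface having four boundary circles; then cutting $F_h$ along any one of $\delta_1,\dots,\delta_4$, or along any one of $\alpha,\beta,\gamma$, visibly leaves a connected surface, so all seven curves are non-separating. Taking $\delta_1$ in the role of $c$, the lantern relation gives $t_c=t_\alpha t_\beta t_\gamma\,t_{\delta_4}^{-1}t_{\delta_3}^{-1}t_{\delta_2}^{-1}$, and the purely formal regrouping
\[
t_c=\bigl(t_\alpha t_{\delta_2}^{-1}\bigr)\bigl(t_{\delta_2}\,t_\beta t_{\delta_3}^{-1}\,t_{\delta_2}^{-1}\bigr)\bigl(t_{\delta_2}t_{\delta_3}\,t_\gamma t_{\delta_4}^{-1}\,t_{\delta_3}^{-1}t_{\delta_2}^{-1}\bigr)
\]
exhibits $t_c$ as a product of three commutators: for non-separating curves $x,y$ the change-of-coordinates principle supplies $\psi\in\mathrm{MCG}(F_h)$ with $\psi(y)=x$, whence $t_xt_y^{-1}=\psi t_y\psi^{-1}t_y^{-1}=[\psi,t_y]$, and a conjugate of a commutator is again a commutator. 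Substituting this factorization into the construction of the previous paragraph with $g=3$ yields the desired $F_h$-bundle over the once-punctured genus-$3$ surface.

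The one point needing genuine care is the lantern embedding with all seven curves non-separating, but via the decomposition $F_h=P\cup P''$ just described this collapses to the one-line connectivity observation above; everything else, namely the flat-bundle construction, the displayed group identity, the orientation bookkeeping, and the Euler-characteristic computation, is routine. (With more effort $t_c$ can be written as a product of two commutators, which would permit a genus-$2$ base and $\chi=6(h-1)$; the three-commutator factorization over a genus-$3$ base is what the Cap construction, a variant of that in \cite{FoozwellRubinstein2016}, calls for here.)
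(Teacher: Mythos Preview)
Your proof is correct and rests on the same essential ingredient as the paper's---the lantern relation with all seven curves non-separating in $F_h$---but the packaging differs. The paper proceeds concretely: the lantern identity $\varphi_7^{-1}\varphi_6^{-1}\varphi_5^{-1}\varphi_4\varphi_3\varphi_2\varphi_1\simeq\mathrm{id}$ yields an $F_h$-bundle over a seven-times punctured sphere with those boundary monodromies, and then three pairs of boundary components, $F_h(\varphi_2)$ with $F_h(\varphi_5^{-1})$, $F_h(\varphi_3)$ with $F_h(\varphi_6^{-1})$, $F_h(\varphi_4)$ with $F_h(\varphi_7^{-1})$, are identified directly (using that twists about non-separating curves are conjugate), producing a bundle over a once-punctured genus-$3$ surface with remaining boundary $F_h(\varphi_1)$. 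You instead pass through the commutator-length characterization of boundary monodromies and rewrite the lantern relation as a three-commutator expression for $t_c$ via the change-of-coordinates trick $t_xt_y^{-1}=[\psi,t_y]$. The two arguments are dual: each of the paper's boundary identifications supplies exactly one of your commutators, and your conjugating elements $\psi$ are the fiber-identification maps implicit in the paper's gluings. Your version is cleaner algebraically and makes the role of commutator length explicit (including your aside about a two-commutator factorization); the paper's hands-on version is tailored to the later Cap construction, where this same bundle is dissected along pairs of pants and individual pieces are modified.
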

\noindent
We only need Proposition \ref{prop:genus3bundle} when $h=3$, in which case $\chi=10(h-1)=20$.
\begin{proof}
Since $h\ge 3$ there is a lantern configuration, as in Figure \ref{fig:lantern}, of $7$ non-separating simple closed curves in $F_{h}$ with corresponding positive Dehn twists $\varphi_{i}$ such that $$\varphi_{7}^{-1}\varphi_{6}^{-1}\varphi_{5}^{-1}\varphi_{4}\varphi_{3}\varphi_{2}\varphi_{1} \simeq\text{ id }.$$
For our purposes we understand the punctured disk to be embedded in the surface $F_{h}$ in such a way that all boundary curves are non-separating curves in $F_{h}$.

\begin{figure}[htbp]
\begin{center}

\begin{tikzpicture}[scale=1]
\draw [red,ultra thick] (0.5,2) circle [radius=0.45];;
\draw [red,ultra thick] (3,6) circle [radius=0.45];;
\draw [red,ultra thick] (5.5,2) circle [radius=0.45];;
\draw [red,ultra thick] (3,3.5) ellipse (4.5 and 4.5);;
\draw [blue,ultra thick] (3,2) ellipse (3.7 and 1.3);;
\draw[rotate around={58:(1.85,3.7)}, blue, ultra thick]  (2,4) ellipse (3.05 and 1.2);;
\draw[rotate around={-58:(3.3,2.0)}, blue, ultra thick]   (2,4) ellipse (3.2 and 1.2);;
\node at (0.5,2) {$C_{1}$};
\node at (5.5,2) {$C_{3}$};
\node at (3,6) {$C_{2}$};
\node at (-0.2,-0.2) {$C_{4}$};
\node at (1,4.5) {$C_{5}$};
\node at (3.2,1.1) {$C_{7}$};
\node at (5.1,4.5) {$C_{6}$};
\end{tikzpicture}
\caption{Lantern Curves $C_{1}, C_{2}, C_{3}, C_{4}, C_{5}, C_{6}, C_{7}$ yielding the lantern relation $\tau_{C_{4}}\tau_{C_{3}}\tau_{C_{2}}\tau_{C_{1}}=\tau_{C_{7}}\tau_{C_{6}}\tau_{C_{5}}$. These 7 curves are understood to lie on a 3-times punctured disk, which is embedded in a closed surface so that the curves are homologically non-trivial and homologically distinct.}
\label{fig:lantern}
\end{center}
\end{figure}
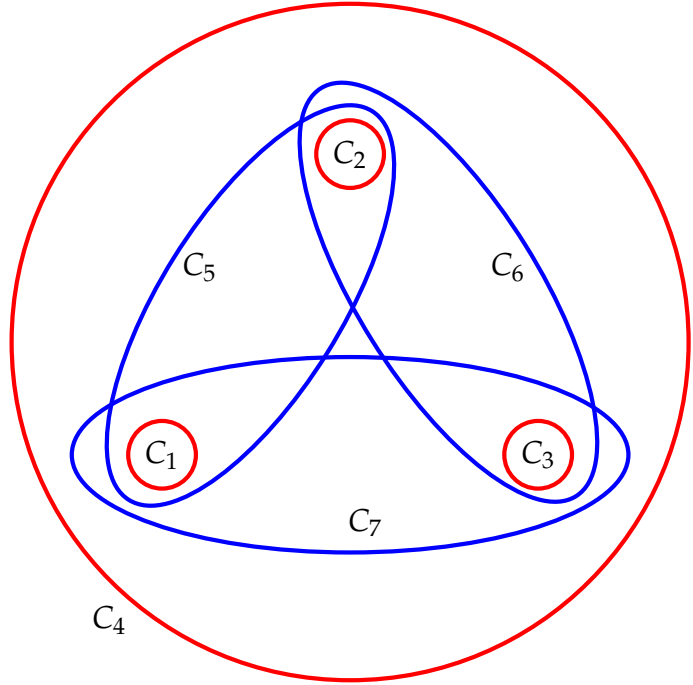

Corresponding to this identity there is an $F_{h}$ bundle over the seven-times punctured $2$-sphere with boundary monodromies given by $\varphi_{1}, \varphi_{2}, \varphi_{3}, \varphi_{4}, \varphi_{5}^{-1}, \varphi_{6}^{-1}, \varphi_{7}^{-1}$. 
Then we can identify six of the boundary components in pairs, 
$$F_{h}(\varphi_{2}), F_{h}(\varphi_{5}^{-1})  \text{ and }  F_{h}(\varphi_{3}), F_{h}(\varphi_{6}^{-1})  \text{ and }  F_{h}(\varphi_{4}), F_{h}(\varphi_{7}^{-1})$$
resulting in the desired manifold with boundary $F_{h}(\varphi_{1})$.

The Euler characteristic is immediate from the product formula for bundles:   $(2-7)(2-2h)=10(h-1)$. 
\end{proof}
The use of the lantern relation above is inspired by a somewhat similar use by Foozwell and Rubinstein. Its essential feature is that it involves an \emph{odd} number of Dehn twists and, indeed, has weight one, so that a composition of $n$ twists equals a composition of $n+1$ twists (along  non-separating curves).

\begin{lemma}\label{lemma:genus3cobordism}
If $\tau$ is a Dehn twist about a non-separating simple closed curve in a surface $F_{h}$ of genus $h\ge 2$, then there is a Dehn twist $\tau'$ about a non-separating simple closed curve on the torus $T^{2}$ and a Haken cobordism between $F_{h}(\tau)$ and $T^{2}(\tau')$ with $\chi=0$.
\end{lemma}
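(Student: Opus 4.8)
The goal is to produce, for a non-separating Dehn twist $\tau$ on $F_h$ with $h\ge 2$, a Haken cobordism with zero Euler characteristic from $F_h(\tau)$ down to $T^2(\tau')$ for some non-separating twist $\tau'$ on the torus. The essential observation is that a non-separating simple closed curve on a surface has, by the change-of-coordinates principle, a neighborhood (together with the twist supported in it) that looks identical regardless of the ambient genus; the difference between $F_h(\tau)$ and $F_2(\tau)$ (and ultimately $T^2(\tau')$) is entirely ``away from'' the twisting curve. The plan is therefore to build the cobordism by a sequence of surface-fibered pieces over a pair of pants (three-holed sphere), each time reducing the genus of the fiber by one while keeping the monodromy a non-separating twist, and accumulating Euler characteristic zero along the way.

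\medskip
\noindent\textbf{Key steps.} First, I would set up the genus-reduction building block. Fix a non-separating simple closed curve $c$ in $F_h$ supporting $\tau$, and choose a separating curve $d$ that splits $F_h$ as $F_1' \cup F_{h-1}'$ (a once-punctured torus glued to a once-punctured genus-$(h-1)$ surface) with $c$ lying in the $F_1'$ summand. The point of going down to a torus summand is that the once-punctured torus is the smallest piece that can carry a non-separating curve. Consider the trivial $F_1'$-bundle over a once-punctured torus summand union... more precisely, I would use Lemma's-style fibered cobordisms: take the $F_h$-bundle over a three-holed sphere whose three boundary monodromies are $\tau$, $\tau^{-1}$ compressed onto a smaller summand, and a ``stabilization'' monodromy. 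Concretely, the cleanest route is: (i) observe $F_h(\tau)$ is obtained from $F_{h-1}(\tau)$ by fiberwise connect-sum with the trivial bundle $F_1 \times S^1$ along a section; (ii) realize this connect-sum operation as the boundary of an $F_h$-bundle (or a manifold fibering with a single ``tube'' singularity) over an annulus-with-a-handle-type base, arranged so that $\partial$ has two components $F_h(\tau)$ and $F_{h-1}(\tau)$ (reindexing the twist to the genus-$(h-1)$ surface via change of coordinates), with total space Euler characteristic $0$ by the product formula since the base has $\chi = 0$. Second, I would iterate this from $h$ down to $1$, at each stage checking that the relevant submanifolds (fibers, and the ``tube'' along which summands are glued) are $\pi_1$-injective and incompressible, which is what ``Haken cobordism'' in the sense of Foozwell--Rubinstein requires, and then stack (compose) these cobordisms; Euler characteristics add with a correction equal to $\chi$ of the glued-up boundary torus bundle, which is $0$, so the composite still has $\chi = 0$. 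Third, at the bottom I land on $T^2(\tau')$ where $\tau'$ is a non-separating (hence, up to coordinates, ``the standard'') Dehn twist on the torus, which is the target.

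\medskip
\noindent\textbf{Main obstacle.} The delicate point is arranging the genus-reduction cobordism so that it is genuinely \emph{Haken} — i.e., so that every piece in the hierarchy, in particular the ``tube'' two-sphere-cross-interval (or torus-cross-interval) regions along which the fiberwise connect-sums are performed, is incompressible and $\pi_1$-injective, and so the whole thing admits a reduction to balls by cutting along essential codimension-one submanifolds. A naive fiberwise connect sum introduces a sphere (the connect-sum sphere) cross a circle, and the relevant codimension-one object $S^2\times S^1 \subset$ boundary is \emph{not} $\pi_1$-injective, so I cannot just do it trivially; the fix is to do the connect-sum along a curve rather than a point, keeping everything aspherical, which is exactly the ``fibering with singularities'' maneuver that Foozwell--Rubinstein and the Cartwright--Steger discussion allude to. Verifying asphericity and $\pi_1$-injectivity of the result via Whitehead's theorem and van Kampen, analogously to the Core construction in Theorem~\ref{thm:one}, will be the technical heart of the argument. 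A secondary (routine) check is bookkeeping the Euler characteristic across the iterated gluings, using $\chi(T^2(\cdot)) = 0$ so no error accumulates.
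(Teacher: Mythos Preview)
Your approach has a structural gap that you partially diagnose but do not resolve. You want to build the cobordism out of surface bundles over $2$-dimensional bases (pairs of pants, annuli with handles, etc.), but a genuine fiber bundle has a fixed fiber, so no such piece can have $F_h(\tau)$ on one boundary component and $F_{h-1}(\tau)$ on another. You see this, propose ``fiberwise connect-sum,'' correctly note that this introduces an essential $S^2$ which kills asphericity and $\pi_1$-injectivity, and then retreat to ``connect-sum along a curve'' and ``fibering with singularities'' without ever specifying a manifold. At that level of vagueness there is no way to check the Haken condition or even compute $\chi$, so the argument does not close.

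The paper's construction sidesteps this entirely by fibering over $S^1$ with a \emph{three}-dimensional fiber rather than over a surface with a two-dimensional fiber. Concretely: take a genus-$h$ handlebody and drill out a regular neighborhood of a suitably knotted core curve that runs over every handle once; the exterior is a Haken $3$-manifold $C^3$ with $\partial C^3 = F_h \sqcup T^2$. Inside $C^3$ there is a properly embedded annulus $A$ running from a non-separating curve on $F_h$ to a non-separating curve on $T^2$. The annulus twist along $A$ is a self-diffeomorphism $\Psi$ of $C^3$ restricting to non-separating Dehn twists $\tau$, $\tau'$ on the two boundary surfaces. The mapping torus $C^3(\Psi)$ is then the desired $4$-dimensional cobordism: its boundary is $F_h(\tau)\sqcup T^2(\tau')$, it is Haken because $C^3$ is, and $\chi=0$ is immediate since it fibers over $S^1$. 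The key idea you are missing is to let the fiber be the $3$-dimensional cobordism between the two surfaces, so that the change of genus happens \emph{inside} the fiber rather than across the base.
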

\begin{proof}
In the interior of a genus $h$ handlebody construct a simple closed curve that goes over each handle geometrically once and is suitably complicated so that the complement of the interior of a regular neighborhood gives a Haken cobordism $C^{3}$ between the genus $h$ surface and a torus. Do this in such a way that there is an annulus $A$ with one boundary a non-separating curve on $T^{2}$ and the other boundary a nonseparating curve on $F_{h}$. See the accompanying diagram.\footnote{That this curve has the desired properties follows from Oertel's analysis \cite{Oertel1984} of closed incompressible surfaces in Montesinos knot complements, for example. In particular, these surfaces bound handlebodies on the knot side.
}
Twisting around the annulus $A$ gives a diffeomorphism $\Psi$ of the cobordism that on each end is a Dehn twist about a nonseparating simple closed curve. The mapping torus of this diffeomorphism is a bundle $C^{3}(\Psi)$ over the circle. As such it is a Haken cobordism between $F_{1}(\Psi_{1})$ and $F_{h}(\Psi_{h})$, where $\Psi_{1}$ and $\Psi_{h}$ are the diffeomorphisms induced on each end of the cobordism. As a bundle over the circle it has  $\chi=0$ by the multiplicativity property of the Euler characteristic. \end{proof}

\begin{figure}[htbp]\label{fig:genus3cobordism}
\begin{center}
\begin{tikzpicture}[scale=0.7]
\draw [line width=1pt]
  (-5, 8) 
  .. controls ++(90:-2) and ++(180:1) .. 
  (-3, 4) 
  .. controls ++(180:-2) and ++(180:2) .. 
  (0, 5.5)
 .. controls ++(180:-2) and ++(180:2) .. 
  (3, 4) 
  .. controls ++(180:-2) and ++(180:2) .. 
   (6,5.5) 
   .. controls ++(180:-2) and ++(180:2) ..
    (9, 4) 
    .. controls ++(180:-1) and ++(270:2) .. 
    (11, 8)
    ;
    \draw  [line width=1pt]
  (-5, 8) 
  .. controls ++(90:2) and ++(180:1) .. 
  (-3, 12) 
  .. controls ++(180:-2) and ++(180:2) .. 
  (0, 10.5)
 .. controls ++(180:-2) and ++(180:2) .. 
  (3, 12)
  .. controls ++(180:-2) and ++(180:2) .. 
   (6,10.5) 
   .. controls ++(180:-2) and ++(180:2) .. 
    (9, 12) 
    .. controls ++(180:-1) and ++(90:2) .. 
    (11, 8)
    ;
        \draw  [line width=1pt]
  (-3, 9) 
  .. controls ++(60:-1) and ++(120:1) .. 
  (-3, 7) 
    ;
            \draw  [line width=3pt]
  (-3, 9) 
  .. controls ++(60:1) and ++(120:-1) .. 
  (-3, 7) 
    ;

        \draw  [line width=1pt]
  (3, 9) 
  .. controls ++(60:-1) and ++(120:1) .. 
  (3, 7) 
    ;
            \draw  [line width=3pt]
  (3, 9.) 
  .. controls ++(60:1) and ++(120:-1) .. 
  (3, 7) 
    ;
        \draw  [line width=1pt]
  (9, 9) 
  .. controls ++(60:-1) and ++(120:1) .. 
  (9, 7) 
    ;
       \draw  [line width=3pt]
  (9, 9) 
  .. controls ++(60:1) and ++(120:-1) .. 
  (9, 7) 
    ;
           \draw [color=red, line width=4pt]
  (-.05, 7.05) 
  .. controls ++(300:2) and ++(130:-2.5) .. 
  (6.25, 7.95) 
    ;
               \draw  [color=red, line width=4pt]
  (6, 8.4) 
  .. controls ++(120:1.5) and ++(90:3) .. 
  (10,8) 
   .. controls ++(270:2) and ++(135:-2) .. 
  (6.55,6.95) 
;
              \draw  [color=red, line width=4pt]
  (6.25, 7.25) 
  .. controls ++(135:1) and ++(130:-0.9) .. 
  (6.15,8.85) 
 ;
           \draw  [color=red, line width=4pt]
  (5.9,9.2) 
  .. controls ++(135:1) and ++(135:2) .. 
  (-0.2, 8.8) 
    ;
    
           \draw  [color=red, line width=4pt]
  (-0.5,9.5) 
  .. controls ++(135:1) and ++(75:2) .. 
  (-3.8, 8.8) 
  ;
            \draw  [color=red, line width=4pt]
     (-4,8)          
   .. controls ++(270:3) and ++(135:-1.5) .. 
(0.2,8.5)
    ;
              \draw  [color=red, line width=4pt]
  (-0.5, 7.5) 
  .. controls ++(135:1) and ++(130:-0.9) .. 
  (0,9.23) 
 ;
 \draw [blue, line width=1pt]
  (-5, 8) 
  .. controls ++(90:1) and ++(90:1) .. 
  (-3.4,8) 
  ;
   \draw  [blue, line width=1pt,dashed]
  (-3.4,8) 
    .. controls ++(90:-1) and ++(90:-1) .. 
  (-5, 8) 
  ;
\draw [blue, line width=1.5pt]
  (-4.2, 8) 
  .. controls ++(90:0.3) and ++(90:0.3) .. 
  (-3.8,8) 
  ;
   \draw  [blue, thick,dashed]
  (-3.8,8) 
    .. controls ++(90:-0.3) and ++(90:-0.3) .. 
  (-4.2, 8) 
  ;
\end{tikzpicture}

\caption{Haken cobordism from genus 3 to genus 1, showing a twist annulus.}
\end{center}
\end{figure}
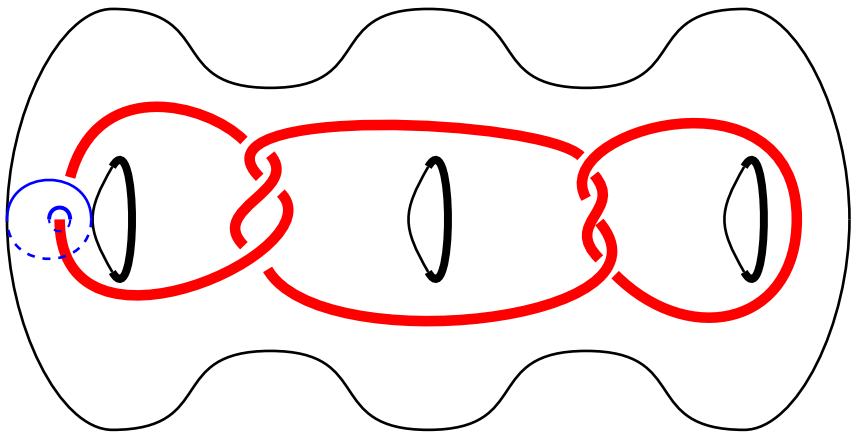

\begin{cor}
If $\varphi$ is a positive Dehn twist about a non-separating simple closed curve on the torus $T^{2}$, then the $T^{2}$ bundle $T^{2}(\varphi)$ bounds a Haken $4$-manifold of  Euler characteristic $\chi=20$.
\end{cor}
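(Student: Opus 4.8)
The plan is to combine Proposition \ref{prop:genus3bundle} and Lemma \ref{lemma:genus3cobordism}, both taken with $h=3$, by gluing the two resulting $4$-manifolds along a common genus-$3$ surface bundle over the circle. Fix a non-separating simple closed curve $c\subset F_{3}$ with positive Dehn twist $\tau_{c}$. By Proposition \ref{prop:genus3bundle} there is a compact, oriented, aspherical $4$-manifold $W_{1}$, namely an $F_{3}$-bundle over the once-punctured surface of genus $3$, with $\chi(W_{1})=10(3-1)=20$ and $\partial W_{1}\cong F_{3}(\tau_{c})$; as a surface bundle over a surface it is Haken in the sense of Foozwell and Rubinstein \cite{FoozwellRubinstein2011}. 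Applying Lemma \ref{lemma:genus3cobordism} to this same twist $\tau_{c}$ produces a Dehn twist $\tau'$ about a non-separating simple closed curve on $T^{2}$ together with a Haken cobordism $C$ with $\chi(C)=0$ and $\partial C\cong F_{3}(\tau_{c})\sqcup T^{2}(\tau')$. Should the genus-$3$ curve supplied by the lantern picture in Proposition \ref{prop:genus3bundle} differ from $c$, the change of coordinates principle provides an orientation-preserving diffeomorphism of $F_{3}$ carrying one to the other, hence an orientation-preserving diffeomorphism of the corresponding bundles, so we may assume the two genus-$3$ boundary pieces agree.

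Next I would set $W:=W_{1}\cup_{F_{3}(\tau_{c})}C$, gluing along the common boundary piece by an orientation-reversing diffeomorphism so that $W$ is oriented; its remaining boundary component is $T^{2}(\tau')$. The sum formula gives
\[
\chi(W)=\chi(W_{1})+\chi(C)-\chi\bigl(F_{3}(\tau_{c})\bigr)=20+0-0=20,
\]
since the Euler characteristic of any bundle over $S^{1}$ vanishes. That $W$ is aspherical follows from Whitehead's theorem once one checks that $F_{3}(\tau_{c})$ is $\pi_{1}$-injective in each piece; this follows by applying the five lemma to the homotopy exact sequences of the two bundles, using that a non-separating boundary circle is $\pi_{1}$-injective in the punctured genus-$3$ surface and that the cobordism $C^{3}$ underlying $C$ has incompressible boundary. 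Cutting $W$ open along $F_{3}(\tau_{c})$ and then continuing with hierarchies for $W_{1}$ and for $C$ exhibits $W$ as a Haken $4$-manifold.

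Finally I would recognize $T^{2}(\tau')$: since every non-separating simple closed curve on $T^{2}$ carries a primitive homology class and $SL(2,\mathbb{Z})$ acts transitively on such classes, the change of coordinates principle yields an orientation-preserving diffeomorphism $T^{2}(\tau')\cong T^{2}(\varphi)$ provided $\tau'$ is a positive twist; if $\tau'$ is negative one replaces $W$ by $-W$, whose boundary is $-T^{2}(\tau')\cong T^{2}\bigl((\tau')^{-1}\bigr)$ with $(\tau')^{-1}$ positive, still with $\chi=20$. Either way $W$ (or $-W$) is the desired Haken $4$-manifold bounding $T^{2}(\varphi)$ with $\chi=20$. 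The argument is essentially bookkeeping on top of Proposition \ref{prop:genus3bundle} and Lemma \ref{lemma:genus3cobordism}; the points needing attention — matching the genus-$3$ twists, the $\pi_{1}$-injectivity feeding Whitehead's theorem and the hierarchy, and the orientation tracking that pins down the sign of the torus twist — are all routine, so I do not expect a genuine obstacle here.
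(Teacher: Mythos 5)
Your proposal is correct and follows exactly the paper's argument: attach the cobordism of Lemma \ref{lemma:genus3cobordism} to the boundary of the genus-$3$ fiber bundle from Proposition \ref{prop:genus3bundle}, with the Euler characteristic $20+0-0=20$ coming from the sum formula. The additional bookkeeping you supply (matching the twists, Whitehead's theorem, and normalizing the sign of the torus twist) is consistent with the lemmas the paper proves in Section \ref{sec:splitters}.
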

\begin{proof}
Attach the  cobordism from Lemma \ref{lemma:genus3cobordism} to the boundary of the bundle produced in Proposition \ref{prop:genus3bundle}, using a fiber surface of genus $h=3$.
\end{proof}
\section{Splitters and connectors}
The required splitters and connectors all arise as certain torus bundles over a punctured disk.
\label{sec:splitters}
\begin{lemma}
If $\varphi$ and $\psi$ are positive Dehn twists about non-separating simple closed curves on the torus, then $T^{2}(\varphi)$ and $T^{2}(\psi)$ are orientation-preserving diffeomorphic.
\end{lemma}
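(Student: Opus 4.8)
The plan is to reduce the lemma to two standard facts: the change-of-coordinates principle on the torus, and the observation that conjugate monodromies give orientation-preservingly diffeomorphic mapping tori.

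First I would identify the monodromies up to conjugacy. Recall that $\mathrm{MCG}^{+}(T^{2})\cong\mathrm{SL}(2,\mathbb{Z})$ acting on $H_{1}(T^{2};\mathbb{Z})\cong\mathbb{Z}^{2}$, and that a non-separating simple closed curve $\gamma\subset T^{2}$ carries a primitive homology class. Since $\mathrm{SL}(2,\mathbb{Z})$ acts transitively on primitive vectors, for any two non-separating simple closed curves $\gamma,\gamma'$ there is an orientation-preserving diffeomorphism $h$ of $T^{2}$ with $h(\gamma)=\gamma'$. A \emph{positive} Dehn twist $\tau_{\gamma}$ depends only on the orientation of $T^{2}$ (not on a choice of orientation of $\gamma$), so $h\tau_{\gamma}h^{-1}$ is isotopic to $\tau_{\gamma'}$; equivalently, every positive Dehn twist about a non-separating curve is conjugate in $\mathrm{MCG}^{+}(T^{2})$ to the standard one, say $\bigl(\begin{smallmatrix}1&1\\0&1\end{smallmatrix}\bigr)$. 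Hence, writing $\varphi=\tau_{\gamma}$ and $\psi=\tau_{\gamma'}$, there is an orientation-preserving diffeomorphism $h$ of $T^{2}$ with $\psi$ isotopic to $h\varphi h^{-1}$.

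Second I would turn the conjugacy into a diffeomorphism of total spaces. For an oriented surface $F$, an orientation-preserving $f\colon F\to F$, and $h\in\mathrm{Diff}^{+}(F)$, the diffeomorphism $(x,t)\mapsto(h(x),t)$ of $F\times[0,1]$ carries the identification $(x,0)\sim(f(x),1)$ defining $F(f)$ to the identification $(x,0)\sim(hfh^{-1}(x),1)$ defining $F(hfh^{-1})$, so it descends to an orientation-preserving diffeomorphism $F(f)\cong F(hfh^{-1})$. Likewise an isotopy of the monodromy induces an orientation-preserving diffeomorphism of the associated mapping tori, since the bundle over $S^{1}$ depends only on the isotopy class of $f$. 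Combining these, $T^{2}(\varphi)\cong T^{2}(h\varphi h^{-1})\cong T^{2}(\psi)$ via orientation-preserving diffeomorphisms, which is the assertion.

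The argument is essentially bookkeeping; the one place to be careful is the orientation accounting — checking that the conjugating diffeomorphism of $T^{2}$ really can be taken orientation-preserving and that a ``positive'' Dehn twist is intrinsic to the oriented surface, so that conjugation by an orientation-preserving diffeomorphism carries positive twists to positive twists rather than to their inverses.
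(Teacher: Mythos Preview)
Your argument is correct and follows the same approach as the paper: find an orientation-preserving diffeomorphism of $T^{2}$ carrying one non-separating curve to the other, and observe that this induces an orientation-preserving diffeomorphism of the mapping tori. You have simply spelled out in more detail (via $\mathrm{SL}(2,\mathbb{Z})$ and the conjugacy formula $h\tau_{\gamma}h^{-1}\simeq\tau_{h(\gamma)}$) what the paper states in two sentences.
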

\begin{proof}
There is an orientation-preserving diffeomorphism taking any given non-separating  simple closed curve to any other one. Such a diffeomorphism induces a diffeomorphism of the corresponding bundles over the circle.
\end{proof}
\begin{lemma}
If $\psi$ is a positive Dehn twist about non-separating simple closed curve on the torus, then $-T^{2}(\psi)$ and $T^{2}(\psi^{-1})$ are orientation-preserving diffeomorphic.
\end{lemma}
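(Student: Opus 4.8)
The statement is the specialization to $F=T^{2}$, $\varphi=\psi$ of the general identity $-F(\varphi)\cong F(\varphi^{-1})$ recorded in the discussion of the mapping-torus notation, so the plan is simply to make that identity explicit. Recall that $T^{2}(\psi)=(T^{2}\times[0,1])/\!\!\sim$ with $(x,0)$ identified to $(\psi(x),1)$, and that the orientation of $T^{2}(\psi)$ is the one induced from the product orientation on $T^{2}\times[0,1]$ (here $\psi$, being a positive Dehn twist, is orientation-preserving, and $\psi^{-1}$ is likewise orientation-preserving — a negative Dehn twist about the same non-separating curve — so $T^{2}(\psi^{-1})$ is a well-defined oriented manifold and the assertion makes sense).

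\textbf{Key step.} Consider the map $r\colon T^{2}\times[0,1]\to T^{2}\times[0,1]$ given by $r(x,t)=(x,1-t)$. It is a diffeomorphism, and it is orientation-reversing since it fixes the $T^{2}$ factor and reverses the interval factor. I would then check that $r$ carries the gluing relation defining $T^{2}(\psi)$ to the one defining $T^{2}(\psi^{-1})$: under $r$ the identified pair $(x,0)\sim(\psi(x),1)$ is sent to the pair $(x,1)$, $(\psi(x),0)$, so $r$ matches $(\psi(x),0)$ with $(x,1)$; setting $y=\psi(x)$ this reads $(y,0)\sim(\psi^{-1}(y),1)$, which is exactly the defining relation of $T^{2}(\psi^{-1})$. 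Hence $r$ descends to a diffeomorphism $\bar r\colon T^{2}(\psi)\to T^{2}(\psi^{-1})$.

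\textbf{Conclusion.} Since $r$ is orientation-reversing and both quotients carry the orientations induced from the product orientations on $T^{2}\times[0,1]$, the descended map $\bar r$ is orientation-reversing; equivalently, $\bar r$ is an orientation-preserving diffeomorphism from $-T^{2}(\psi)$ onto $T^{2}(\psi^{-1})$, as claimed. The only point requiring any care is the orientation bookkeeping — verifying that the product orientation descends consistently to each mapping torus and that $r$ induces exactly a global change of sign — but this is routine and amounts to no more than the general remark $-F(\varphi)\cong F(\varphi^{-1})$ applied with $F=T^{2}$ and $\varphi=\psi$; there is no substantive obstacle.
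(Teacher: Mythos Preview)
Your argument is correct; it is precisely the general identity $-F(\varphi)\cong F(\varphi^{-1})$ that the paper itself records when introducing the mapping-torus notation, made explicit via the interval-reversal $r(x,t)=(x,1-t)$. This works for any oriented $F$ and any orientation-preserving $\varphi$, so in particular for $F=T^{2}$ and $\varphi=\psi$.

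The paper's own proof of the lemma takes a genuinely different route: rather than flipping the interval direction, it flips the fiber. It observes that for any nontrivial simple closed curve $c\subset T^{2}$ there is an orientation-reversing diffeomorphism $h\colon T^{2}\to T^{2}$ that is the identity on $c$; such an $h$ satisfies $h\psi h^{-1}=\psi^{-1}$, so $(x,t)\mapsto(h(x),t)$ descends to an orientation-reversing diffeomorphism $T^{2}(\psi)\to T^{2}(\psi^{-1})$. Your approach is more elementary and fully general, and indeed shows that the lemma as stated requires nothing special about Dehn twists or the torus. The paper's approach, by contrast, exploits the specific symmetry of $(T^{2},c)$ and would be the natural tool if one needed a fiber-preserving identification over the \emph{identity} of $S^{1}$ rather than over its reflection; for the purposes of the lemma either suffices.
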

\begin{proof}
For any nontrivial simple closed curve on the torus there is an orientation reversing diffeomorphism that is the identity on the simple closed curve. Such a diffeomorphism induces a diffeomorphism of the corresponding bundles over the circle.
\end{proof}

\begin{cor}[Connector]
For any orientation-preserving diffeomorphism $\psi$ of the torus $T^{2}$ there is a compact, oriented, smooth, aspherical $4$-manifold $\text{Conn}$ with $\chi=0$ and aspherical and $\pi_{1}$-injective boundary consisting of $$T^{2}(\psi^{-1})\sqcup T^{2}(\psi^{-1})\sqcup T^{2}(\psi)\sqcup T^{2}(\psi).$$
\end{cor}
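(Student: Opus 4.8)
The plan is to realize $\text{Conn}$ as a $T^{2}$-bundle over the four-times punctured $2$-sphere, exactly following the recipe used above for the Splitters and for the genus-$3$ bundle of Proposition \ref{prop:genus3bundle}. First I would apply the lemma on surface bundles over punctured spheres to the four homeomorphisms $\varphi_{1}=\psi^{-1}$, $\varphi_{2}=\psi^{-1}$, $\varphi_{3}=\psi$, $\varphi_{4}=\psi$. Their ordered composition is $\varphi_{4}\varphi_{3}\varphi_{2}\varphi_{1}=\psi\,\psi\,\psi^{-1}\,\psi^{-1}=\mathrm{id}$, so the lemma produces a $T^{2}$-bundle $E$ over the four-times punctured $2$-sphere $P$ with monodromies $\psi^{-1},\psi^{-1},\psi,\psi$ around the punctures. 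Since $\psi$ is orientation-preserving so are all four monodromies, and orienting $T^{2}$ and $P$ endows $E$ with an orientation; I then set $\text{Conn}=E$, a compact oriented smooth $4$-manifold.

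Next I would read off the required properties from the bundle structure. The product formula for bundles gives $\chi(\text{Conn})=\chi(T^{2})\,\chi(P)=0\cdot(-2)=0$. Restricting $E$ over the four boundary circles of $P$ and using the same conventions as in the Splitter and genus-$3$ constructions identifies
\[
\partial\,\text{Conn}=T^{2}(\psi^{-1})\sqcup T^{2}(\psi^{-1})\sqcup T^{2}(\psi)\sqcup T^{2}(\psi);
\]
the only bookkeeping issue is the induced orientation on the boundary circles, but this is harmless because $-T^{2}(\varphi)\cong T^{2}(\varphi^{-1})$ and the multiset $\{\psi^{-1},\psi^{-1},\psi,\psi\}$ is invariant under $\varphi\mapsto\varphi^{-1}$. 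For asphericity, note $T^{2}$ is a $K(\mathbb{Z}^{2},1)$ while $P$, a compact surface with non-empty boundary, is homotopy equivalent to $\bigvee^{3}S^{1}$, hence a $K(F_{3},1)$; the homotopy exact sequence of the fibration $T^{2}\to E\to P$ then forces $\pi_{n}(E)=0$ for $n\ge 2$, and the same argument restricted over each boundary circle shows each boundary piece, a torus bundle over $S^{1}$, is aspherical.

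Finally I would verify $\pi_{1}$-injectivity of the boundary. Each boundary circle $c\hookrightarrow P$ is $\pi_{1}$-injective (it is a boundary curve of a surface which is not a disk), so for the corresponding boundary component one has a commuting ladder of short exact sequences
\[
1\to\mathbb{Z}^{2}\to\pi_{1}(T^{2}(\psi^{\pm 1}))\to\mathbb{Z}\to 1,\qquad 1\to\mathbb{Z}^{2}\to\pi_{1}(\text{Conn})\to F_{3}\to 1,
\]
in which the map on fiber groups is the identity and the map on base groups is injective; the five lemma gives injectivity of the middle vertical map. I do not expect a real obstacle here: the entire argument is a direct application of the surface-bundle lemma together with standard facts about fibrations with aspherical fiber and base, and the one point requiring care --- the orientations of the boundary pieces --- is neutralized by the $\psi\leftrightarrow\psi^{-1}$ symmetry of the chosen monodromies.
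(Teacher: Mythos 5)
Your construction is exactly the one the paper intends: the section opens by saying the connectors ``arise as certain torus bundles over a punctured disk,'' and your bundle over the four-times punctured sphere with monodromies $\psi^{-1},\psi^{-1},\psi,\psi$ is that bundle (the paper itself leaves the corollary without a written proof). Your verifications of $\chi=0$, asphericity via the fibration exact sequence, and $\pi_{1}$-injectivity via the five lemma, together with the observation that the orientation bookkeeping is neutralized by the $\psi\leftrightarrow\psi^{-1}$ symmetry, correctly fill in the details.
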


\begin{lemma}
The transformation of the torus given by
\(
\Phi=\left(\begin{array}{rr}0 & 1 \\-1&0 \end{array}\right)
\)
is the composition of three (right-handed) Dehn twists.
\end{lemma}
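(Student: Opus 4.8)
The plan is to work inside the mapping class group of the torus. Recall that $\pi_{0}\,\mathrm{Diff}^{+}(T^{2})\cong SL_{2}(\mathbb{Z})$, the isomorphism being the faithful action on $H_{1}(T^{2};\mathbb{Z})\cong\mathbb{Z}^{2}$, and that the bundle $T^{2}(\varphi)$ depends only on the isotopy class of $\varphi$. So it suffices to write the matrix $\Phi$ as a product of three matrices, each representing a right-handed Dehn twist about a non-separating simple closed curve.

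First I would fix simple closed curves $\alpha,\beta$ on $T^{2}$ meeting transversely in a single point and choose the basis of $H_{1}(T^{2})$ so that the right-handed Dehn twist $t_{\alpha}$ acts by the transvection $\bigl(\begin{smallmatrix}1&1\\0&1\end{smallmatrix}\bigr)$. The right-handed twist $t_{\beta}$ then acts by a lower transvection $\bigl(\begin{smallmatrix}1&0\\ \epsilon&1\end{smallmatrix}\bigr)$, and the sign $\epsilon$ --- i.e.\ the requirement that $t_{\alpha}$ and $t_{\beta}$ have the \emph{same} handedness --- is pinned down by the braid relation $t_{\alpha}t_{\beta}t_{\alpha}=t_{\beta}t_{\alpha}t_{\beta}$: multiplying out both sides and comparing the $(1,2)$ entries forces $2+\epsilon=1$, hence $\epsilon=-1$ and $t_{\beta}=\bigl(\begin{smallmatrix}1&0\\-1&1\end{smallmatrix}\bigr)$. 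One checks that this matrix is conjugate in $SL_{2}(\mathbb{Z})$ to $\bigl(\begin{smallmatrix}1&1\\0&1\end{smallmatrix}\bigr)$, confirming that it is genuinely a right-handed twist about a non-separating curve (as all such curves are equivalent under $\mathrm{Diff}^{+}(T^{2})$).

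The computation is then immediate: $t_{\alpha}t_{\beta}=\bigl(\begin{smallmatrix}0&1\\-1&1\end{smallmatrix}\bigr)$ and therefore
\[
t_{\alpha}t_{\beta}t_{\alpha}=\left(\begin{array}{rr}0 & 1 \\ -1 & 0\end{array}\right)=\Phi ,
\]
exhibiting $\Phi$ as a composition of three right-handed Dehn twists; indeed it is precisely the common value of the two sides of the braid relation, equivalently the image in $SL_{2}(\mathbb{Z})$ of the half-twist $\sigma_{1}\sigma_{2}\sigma_{1}$ of the three-strand braid group under the standard surjection $B_{3}\to SL_{2}(\mathbb{Z})$.

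The only genuine point to watch --- the \emph{main obstacle}, such as it is --- is the bookkeeping of orientation conventions: one must be sure the transvections are right-handed rather than left-handed twists and that the product comes out to $\Phi$ rather than $\Phi^{-1}$. Both are settled by the braid-relation normalization above together with the orientation convention already fixed for $T^{2}(\varphi)$ earlier in the paper; with the opposite convention one would simply obtain $\Phi^{-1}=(t_{\alpha}t_{\beta}t_{\alpha})^{-1}$ instead, so once a single convention is adopted consistently the statement as phrased is forced, and no deeper input is required.
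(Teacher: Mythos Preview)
Your proof is correct and is essentially the same as the paper's: both exhibit $\Phi=\tau_{a}\tau_{b}\tau_{a}$ with $\tau_{a}=\bigl(\begin{smallmatrix}1&1\\0&1\end{smallmatrix}\bigr)$ and $\tau_{b}=\bigl(\begin{smallmatrix}1&0\\-1&1\end{smallmatrix}\bigr)$, the paper simply stating the matrix identity while you additionally justify the handedness via the braid relation.
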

\begin{proof}
\[
\left(\begin{array}{rr}0 & 1 \\-1&0 \end{array}\right) = 
\tau_{a}\tau_{b}\tau_{a}=
\left(\begin{array}{rr}1 & 1 \\0&1 \end{array}\right)
\left(\begin{array}{rr}1 & 0 \\-1&1 \end{array}\right)
\left(\begin{array}{rr}1 & 1 \\0&1 \end{array}\right).
\]
\end{proof}

\begin{cor}[Splitter]
If $\Phi=\left(\begin{array}{rr}0 & 1 \\-1&0 \end{array}\right)$, then there is an oriented aspherical 4-manifold with $\pi_{1}$-injective boundary
$$T^{2}(\tau_{a})\sqcup T^{2}(\tau_{b})\sqcup  T^{2}(\tau_{a}) \sqcup -T^{2}(\Phi)$$
 and $\chi=0$.
\end{cor}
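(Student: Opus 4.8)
The plan is to produce the Splitter directly as a torus bundle over a disk with three holes (a compact planar surface with four boundary circles), mimicking the construction in the proof of Proposition~\ref{prop:genus3bundle} but with fiber $T^{2}$ instead of $F_{h}$ and with no boundary components glued together. The only new input needed is the relation just proved, $\Phi=\tau_{a}\tau_{b}\tau_{a}$, which we rewrite as
\[
\tau_{a}\,\tau_{b}\,\tau_{a}\,\Phi^{-1}=\mathrm{id}
\]
in the orientation-preserving mapping class group of the torus. This is a length-four relation among orientation-preserving diffeomorphisms of $T^{2}$, exactly the sort of input consumed by the lemma stated above that produces a surface bundle over an $n$-times-punctured $2$-sphere out of a relation $\varphi_{n}\cdots\varphi_{1}=\mathrm{id}$.

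Applying that lemma to the relation above yields a $T^{2}$ bundle $W^{4}$ over the four-times-punctured $2$-sphere whose monodromies around the four boundary circles are $\tau_{a},\tau_{b},\tau_{a},\Phi^{-1}$, so that
\[
\partial W^{4}=T^{2}(\tau_{a})\sqcup T^{2}(\tau_{b})\sqcup T^{2}(\tau_{a})\sqcup T^{2}(\Phi^{-1}).
\]
Using the convention $-F(\varphi)\cong F(\varphi^{-1})$ recorded after Proposition~\ref{prop:core}, the last component is $-T^{2}(\Phi)$, so $\partial W^{4}$ is as asserted. The remaining properties are immediate. The total space $W^{4}$ is orientable because $T^{2}$, the planar base, and all four monodromies are oriented, respectively orientation-preserving; it is aspherical because it fibers over a base that is homotopy equivalent to a wedge of circles, hence aspherical, with aspherical fiber $T^{2}$, so the homotopy exact sequence of the fibration forces $\pi_{n}(W^{4})=0$ for $n\ge 2$; each boundary component $T^{2}(\varphi_{i})$ is aspherical (a torus bundle over a circle) and $\pi_{1}$-injective in $W^{4}$, since each boundary circle of the base is $\pi_{1}$-injective in the base and one compares the two homotopy exact sequences via the five lemma; and by the product formula for bundles $\chi(W^{4})=(2-4)\cdot\chi(T^{2})=(-2)\cdot 0=0$.

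The construction has no genuinely hard step; the one point requiring care is the bookkeeping of orientations and of the order in which the monodromies are composed, so that the distinguished boundary component comes out as $-T^{2}(\Phi)$ and not $T^{2}(\Phi)$. This is exactly why the relation is written as $\tau_{a}\tau_{b}\tau_{a}\Phi^{-1}=\mathrm{id}$, with $\Phi$ appearing inverted, and it is handled by the same conventions already used in Proposition~\ref{prop:genus3bundle}. A secondary subtlety worth making explicit is that $\pi_{1}$-injectivity is claimed only for the four torus-bundle boundary pieces — the ones lying over the boundary circles of the base — and not, say, for a fiber torus over an interior point.
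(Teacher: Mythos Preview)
Your proof is correct and follows essentially the same approach as the paper. Both construct the Splitter as a torus bundle over the compact planar surface with four boundary circles (what the paper calls the ``three-times punctured disk''), using the factorization $\Phi=\tau_{a}\tau_{b}\tau_{a}$ to read off the boundary monodromies; you simply supply more detail on asphericity, $\pi_{1}$-injectivity of the boundary, and the orientation bookkeeping than the paper's one-line proof does.
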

\begin{proof}
The torus bundle over the three-times punctured disk with puncture monodromies $\tau_{a}, \tau_{b}, \tau_{a}$ has $\chi=0$ since it is a bundle with fiber of Euler characteristic $0$. \end{proof}
\section{Caps}\label{sec:caps}
We have the general question about when a closed aspherical $3$-manifold bounds a compact Haken $4$-manifold (with boundary being $\pi_{1}$-injective), with  Euler characteristic as small as possible. We are particularly interested in this question for the torus bundle over the circle with monodromy a nontrivial Dehn twist, and disjoint unions of such manifolds. We adopt the short-hand notation $W^{4}[\alpha,\beta,\gamma]$ to denote the bundle with fiber $F$ over the twice-punctured disk with monodromies $\alpha,\beta,\gamma:F\to F$ with $\alpha\beta\gamma=\text{id}$.

\begin{prop}[Foozwell-Rubinstein torus trick \cite{FoozwellRubinstein2016}, Lemma 2.3 interpreted in our context]
Let $\varphi,\tau_{c}:F_{h}\to F_{h}$ be orientation-preserving diffeomorphisms, with $\tau_{c}$ a Dehn twist about a simple closed curve $c$ in $F_{h}$ pointwise fixed by $\varphi$. Then there is a $4$-manifold $\widehat{W}^{4}$ obtained from $F_{h}(\varphi)\times I$ by attaching a torus handle $T^{2}\times I\times I$ along a pair of parallel copies  of framed tori $T^{2}\times I\times \{0,1\}$ in $F_{h}(\varphi)\times \{1\}$ such that the boundary of $\widehat{W}^{4}$ consists of  $F_{h}(\varphi)$, $T^{2}(\psi)$, and $F_{h}(\varphi^{-1}\tau_{c}^{-1})$.  Here $\psi$ is a Dehn twist about a non-separating simple closed curve on the torus.
\end{prop}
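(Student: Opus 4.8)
\medskip
\noindent\textbf{Proof proposal.} The plan is to realize the torus handle attachment concretely and then read off the three boundary components by cutting $F_h(\varphi)$ along two parallel tori and tracking how the free part of the handle reglues the resulting pieces. The object $T^2\times I\times I$ attached along $T^2\times I\times\{0,1\}$ is a \emph{round $1$-handle with torus core}, and the construction is a round-handle surgery supported near the torus $c\times S^1$.

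First I would set up the torus. Since $\varphi$ fixes $c$ pointwise, isotope $\varphi$ so that it fixes an annular neighborhood $A\cong c\times[-1,1]$ of $c$ in $F_h$ pointwise; then $A\times[0,1]$ descends to an embedded product neighborhood $\nu\cong T^2\times[-1,1]$ of the torus $T:=c\times S^1$ in the mapping torus $F_h(\varphi)$. Write $Q:=\overline{F_h(\varphi)\setminus\nu}$, the mapping torus of $\varphi|_{F_h\setminus\operatorname{int}A}$, with $\partial Q$ the two tori $T^2\times\{-1\}$ and $T^2\times\{1\}$. Inside $\nu\times\{1\}\subset F_h(\varphi)\times\{1\}$ choose two disjoint parallel copies of the framed torus $T^2\times\{\pm\tfrac12\}$ (with the product framing), thicken them to $N_{-},N_{+}$, and glue the round handle to $F_h(\varphi)\times I$ along $N_{-}\sqcup N_{+}$. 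The one genuine choice is the attaching data (the framing together with the identification diffeomorphism along $N_{+}$): it is prescribed so as to differ from the ``straight'' gluing by a twist along the curve $c$ in the $T^2=c\times S^1$ direction — this is what will inject $\tau_c^{-1}$ into the monodromy — and so that the induced self-gluing of the torus factor is a Dehn twist $\psi$ about a non-separating curve.

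Next I would compute $\partial\widehat W^4$. The bottom $F_h(\varphi)\times\{0\}$ is untouched and contributes $F_h(\varphi)$. Removing $\operatorname{int}(N_{-}\sqcup N_{+})$ from $F_h(\varphi)\times\{1\}$ splits it into a middle product region $R\cong T^2\times I$ (the part of $\nu$ between $N_-$ and $N_+$) and a collared copy of $Q$; the four torus faces so created are tubed together in pairs by the free boundary $T^2\times\{0,1\}\times I$ of the handle. The tube joining the two faces of $R$ closes $R$ up into a torus bundle over the circle whose monodromy is exactly the prescribed $\psi$, giving the component $T^2(\psi)$. The tube joining the two components of $\partial Q$ closes $Q$ up into a surface bundle over the circle; since $F_h$ is recovered from $F_h\setminus\operatorname{int}A$ by regluing along $c$, this closed-up $Q$ is naturally $F_h(\tau_c\varphi)$ (the monodromy $\varphi$ off $A$, extended across the reglued annulus with one extra $\tau_c$), and since it inherits from $\widehat W^4$ the reversed orientation, $-F_h(\tau_c\varphi)\cong F_h\big((\tau_c\varphi)^{-1}\big)=F_h(\varphi^{-1}\tau_c^{-1})$. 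Hence $\partial\widehat W^4=F_h(\varphi)\sqcup T^2(\psi)\sqcup F_h(\varphi^{-1}\tau_c^{-1})$, as claimed. (The same argument works whether $c$ is separating or non-separating; only the non-separating case is used later.)

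The main obstacle is the bookkeeping in the last paragraph: one must pin down the framing and the twisting of the attaching map so that \emph{simultaneously} (i) the reglued middle region is a torus bundle with monodromy a Dehn twist about a non-separating curve — not the trivial bundle $T^2\times S^1$ nor a bundle with separating-curve monodromy — and (ii) the reglued complement has monodromy precisely $\varphi^{-1}\tau_c^{-1}$ rather than $\varphi^{-1}\tau_c^{\pm k}$ for the wrong $k$ or with an extra framing twist. The cleanest route is to fix explicit coordinates $T^2=S^1_c\times S^1$ on the attaching tori — the first factor the core of $A$, the second the bundle direction — express the handle gluings as elements of the mapping class group of $T^2$ in this basis, and verify the two required identities by a direct matrix computation of the type carried out in the lemmas of Section~\ref{sec:splitters}.
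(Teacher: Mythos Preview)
The paper does not actually prove this proposition: it is stated as a citation of \cite{FoozwellRubinstein2016}, Lemma~2.3, and no argument is given beyond the subsequent remark that $\widehat W^{4}$ can be viewed as a modification of the pair-of-pants surface bundle $W^{4}[\varphi,\tau_{c},\varphi^{-1}\tau_{c}^{-1}]$. So there is no proof in the paper to compare yours against; what you have written is a genuine elaboration of a result the author chose to import.

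That said, your sketch is sound. The description of $T^{2}\times I\times I$ as a round $1$-handle with torus core, the identification of the invariant torus $c\times S^{1}\subset F_{h}(\varphi)$ with its product neighborhood $\nu$, and the decomposition of the surgered top boundary into the middle region $R$ and the complementary piece $Q$ are all correct and constitute exactly the kind of unpacking one would want. You are also right to flag the framing bookkeeping as the only real content: the single attaching choice (an element of the mapping class group of $T^{2}$, in your basis $S^{1}_{c}\times S^{1}$) must simultaneously make the closed-up $R$ into $T^{2}(\psi)$ with $\psi$ a non-separating Dehn twist and make the closed-up $Q$ into $F_{h}(\tau_{c}\varphi)$. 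One small clarification worth adding: you have two independent attaching identifications (at $N_{-}$ and at $N_{+}$), not one, and it is the \emph{relative} twist between them that feeds into both monodromies; this is what gives enough freedom to satisfy (i) and (ii) together. The matrix check you propose, in the style of the lemmas in Section~\ref{sec:splitters}, is exactly the right way to nail it down.

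The paper's one-line alternative viewpoint --- regarding $\widehat W^{4}$ as obtained from the honest surface bundle $W^{4}[\varphi,\tau_{c},\varphi^{-1}\tau_{c}^{-1}]$ over a pair of pants by replacing the $F_{h}(\tau_{c})$ boundary piece with $T^{2}(\psi)$ --- is worth keeping in mind, since it makes the Euler-characteristic drop from $2h-2$ to $0$ transparent and is closer to how the proposition is actually used in the Cap construction.
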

We view  $\widehat{W}^{4}$ as a modification of the  surface bundle $W^{4}=W^{4}[\varphi,\tau_{c}, \varphi^{-1}\tau_{c}^{-1}] $ over a twice-punctured disk. 
Note that while the Euler characteristic $\chi(W^{4})=2h-2$, we always have $\chi(\widehat{W}^{4})=0$, since $\widehat{W}^{4}$ is constructed by gluing together pieces of Euler characteristic $0$ with intersections of Euler characteristic $0$. In particular the application of the torus trick here reduces the Euler characteristic by $2h-2$.

\begin{prop}[Caps]
If $\tau$ is a Dehn twist along a non-separating simple closed curve on the $2$-torus $T^{2}$, then the torus bundle $T^{2}(\tau)$ bounds a compact oriented Haken $4$-manifold Cap with $\pi_{1}$-injective boundary and Euler characteristic $\chi=4$.
\end{prop}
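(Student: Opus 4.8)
The plan is to build Cap by pushing the Foozwell--Rubinstein construction that underlies Proposition~\ref{prop:genus3bundle} as far as it will go: that proposition realizes $F_3(\gamma)$ as the boundary of a genus-$3$ surface bundle over a once-punctured genus-$3$ surface with $\chi = 20$, and I want to shave this down to $\chi = 4$ by applying the torus trick the maximum number of times, then convert the resulting boundary into a single copy of $T^2(\tau)$ at no cost in Euler characteristic.

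First I would revisit the assembly behind Proposition~\ref{prop:genus3bundle}: the lantern relation $\varphi_7^{-1}\varphi_6^{-1}\varphi_5^{-1}\varphi_4\varphi_3\varphi_2\varphi_1 \simeq \mathrm{id}$ gives an $F_3$ bundle over the $7$-punctured sphere $\Sigma_{0,7}$ with $\chi = 10(h-1) = 20$, the seven puncture monodromies being the single twists $\varphi_i = \tau_{C_i}^{\pm 1}$. Choose a pants decomposition $\Sigma_{0,7} = P_1 \cup \dots \cup P_5$ arranged as a chain (so $P_1$ and $P_5$ each contain two punctures, $P_2, P_3, P_4$ each contain one, and there are four interior gluing circles), with the punctures distributed so that each of $P_1, P_2, P_3, P_4$ contains a puncture whose monodromy is $\tau_{C_i}$ with $i \in \{1,2,3,4,5\}$. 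The key geometric point is that in the lantern picture the curves $C_1, C_2, C_3, C_4$ are pairwise disjoint and each is disjoint from each of $C_5, C_6, C_7$; consequently, in each of $P_1, \dots, P_4$ the chosen single twist $\tau_{C_i}$ commutes with the monodromy around a suitable adjacent boundary circle (a product of twists about disjoint curves), which is exactly the hypothesis ``$\tau_c$ pointwise fixed by $\varphi$'' of the torus trick. Applying the torus trick to $P_1, P_2, P_3, P_4$ then replaces four surface-bundle pieces, each with $\chi = 2h-2 = 4$, by four pieces $\widehat W$ with $\chi = 0$, leaving the interior gluings (which are $F_3$-bundles over circles, hence $\pi_1$-injective) untouched; the Euler characteristic drops from $20$ to $4$, and four of the seven boundary components of the result have been converted from $F_3(\tau_{C_i})$ to torus bundles $T^2(\psi_i)$ with $\psi_i$ a non-separating Dehn twist. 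So we now have a compact oriented aspherical Haken $4$-manifold with $\chi = 4$ whose boundary is a disjoint union of seven bundles over the circle with non-separating Dehn-twist monodromy: four of the form $T^2(\cdot)$ and three of the form $F_3(\cdot)$.

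It remains to reduce this seven-component boundary to a single $T^2(\tau)$ without changing $\chi$. First I would attach a copy of the Euler-characteristic-$0$ Haken cobordism of Lemma~\ref{lemma:genus3cobordism} to each of the three $F_3$-bundle components, turning them into torus bundles with non-separating Dehn-twist monodromy; now all seven boundary components are of this type. Then I would splice the seven components together by gluing in auxiliary torus bundles over punctured disks, all with $\chi = 0$ --- these are built exactly like the Splitters and Connectors of Section~\ref{sec:splitters}, and the fact that every torus bundle with non-separating Dehn-twist monodromy can be glued to every other one (up to orientation) gives enough freedom to collapse seven components down to one, with the accumulated monodromy around the surviving outer boundary a single Dehn twist about a non-separating curve. (If that twist comes out with the ``wrong'' sign, the manifold is simply $-\mathrm{Cap}$, which is equally acceptable.) Asphericity of the whole is J.H.C.\ Whitehead's theorem, since every piece and every gluing submanifold is aspherical and $\pi_1$-injective (for the surface bundles and their fibered boundaries from the homotopy exact sequence, and for the $\widehat W$ pieces from the Foozwell--Rubinstein analysis); $\pi_1$-injectivity of the final boundary $T^2(\tau)$ follows from van Kampen's theorem as in the proof of Theorem~\ref{thm:one}; and the Haken property is inherited because all pieces are Haken and are glued along essential codimension-one submanifolds.

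The main obstacle is the middle step: verifying that the torus trick really can be carried out on four of the five pairs of pants simultaneously. This requires a careful coordinated choice of the lantern curves $C_1, \dots, C_7$, of their embedding as a $3$-holed disk inside $F_3$, and of the pants decomposition of $\Sigma_{0,7}$, together with a check that the four interior gluings remain globally consistent after all four modifications; it is exactly here that one also sees why $\chi = 0$ appears to be out of reach, since the fifth pair of pants unavoidably carries monodromies that are twists about two of the pairwise-intersecting curves $C_5, C_6, C_7$, which do not commute, so the trick cannot be applied there. A secondary, purely bookkeeping, nuisance is tracking orientations and signs of twists through the last step so that the surviving boundary component is genuinely $T^2(\tau)$.
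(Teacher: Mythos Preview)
Your proposal is correct and follows essentially the same route as the paper: start from the lantern-relation $F_3$-bundle over the $7$-punctured sphere, decompose the base into five pairs of pants exactly as you describe (the paper labels these $W_2^4,\dots,W_6^4$), apply the Foozwell--Rubinstein torus trick on the first four pieces using the disjointness of $C_1,\dots,C_4$ from one another and from $C_5$, attach the genus-$3$-to-genus-$1$ cobordisms of Lemma~\ref{lemma:genus3cobordism} to the surviving $F_3$-bundle boundaries, and then reduce the seven torus-bundle boundary components to one. The only cosmetic difference is in that last step: the paper simply identifies six of the seven components in three opposite-sign pairs directly (four positive twists from $\varphi_2,\varphi_3,\varphi_4,\varphi_1$ against three negative twists from $\varphi_5^{-1},\varphi_6^{-1},\varphi_7^{-1}$), whereas you propose gluing in auxiliary $\chi=0$ torus bundles over punctured disks; both work, the paper's being slightly cleaner. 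Your identification of why the fifth pair of pants resists the trick (the intersecting curves $C_6,C_7$) matches the paper's closing remark verbatim.
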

\begin{proof}
We start with the analogous result from Proposition \ref{prop:genus3bundle} for a surface of genus 3 (where the resulting $4$-manifold could be chosen to be a surface bundle) and show how to modify that construction to achieve $\chi=4$. That construction started with an $F_{h}$-bundle over the $7$-times punctured $2$-sphere with puncture monodromies $\varphi_{1}, \varphi_{2}, \dots, \varphi_{7}$ such that $$\varphi_{7}^{-1}\varphi_{6}^{-1}\varphi_{5}^{-1}\varphi_{4}\varphi_{3}\varphi_{2}\varphi_{1}=1.$$ Then one identified six of the resulting boundary components in pairs. Recall that the first four are positive Dehn twists and the last three are negative Dehn twists, all about non-separating simple closed curves.  Instead of immediately identifying the boundary bundles in pairs as we did before, we alter four boundary pieces into torus bundles in such a way that each such alteration reduces the Euler characteristic by $4$, using the Foozwell-Rubinstein torus trick. 

We break the $7$-times punctured $2$-sphere into $5$ pairs of pants, with corresponding monodromies, and corresponding preimages
\begin{description}
\item[ $W_{2}^{4}$] $W^{4}[\varphi_{1}, \varphi_{2}, \varphi_{1}^{-1}\varphi_{2}^{-1}],$\\
\item[ $W_{3}^{4}$] $W^{4}[\varphi_{2}\varphi_{1},\varphi_{3}, \varphi_{1}^{-1}\varphi_{2}^{-1}\varphi_{3}^{-1}]$\\
\item[ $W_{4}^{4}$] $W^{4} [\varphi_{3}\varphi_{2}\varphi_{1},\varphi_{4}, \varphi_{1}^{-1}\varphi_{2}^{-1}\varphi_{3}^{-1}\varphi_{4}^{-1}]$\\
\item[ $W_{5}^{4}$] $W^{4} [\varphi_{4}\varphi_{3}\varphi_{2}\varphi_{1},\varphi_{5}^{-1}, \varphi_{1}^{-1}\varphi_{2}^{-1}\varphi_{3}^{-1}\varphi_{4}^{-1}\varphi_{5}],$\\
\item[ $W_{6}^{4}$] $W^{4}[\varphi_{5}^{-1 }\varphi_{4}\varphi_{3}\varphi_{2}\varphi_{1},\varphi_{6}^{-1}, \varphi_{1}^{-1}\varphi_{2}^{-1}\varphi_{3}^{-1}\varphi_{4}^{-1}\varphi_{5}\varphi_{6}]$
\end{description}

We can apply the Torus Trick to the first four of these five manifolds: $\varphi_{1}$ fixes the support of $\varphi_{2}$; $\varphi_{2}\varphi_{1}$ fixes the support of $\varphi_{3}$;  $\varphi_{3}\varphi_{2}\varphi_{1}$ fixes the support of $\varphi_{4}$; and $\varphi_{4}\varphi_{3}\varphi_{2}\varphi_{1}$ fixes the support of $\varphi_{5}$.

Then  glue back together the altered pieces, in place of the originals, to form a connected manifold. Finally attach torus bundle cobordisms to the remaining non-identified boundaries and identify all remaining pairs, leaving the desired manifold with $\chi=4$. The construction is summarized in the schematic drawing in Figure \ref{fig:pants}.
\end{proof}
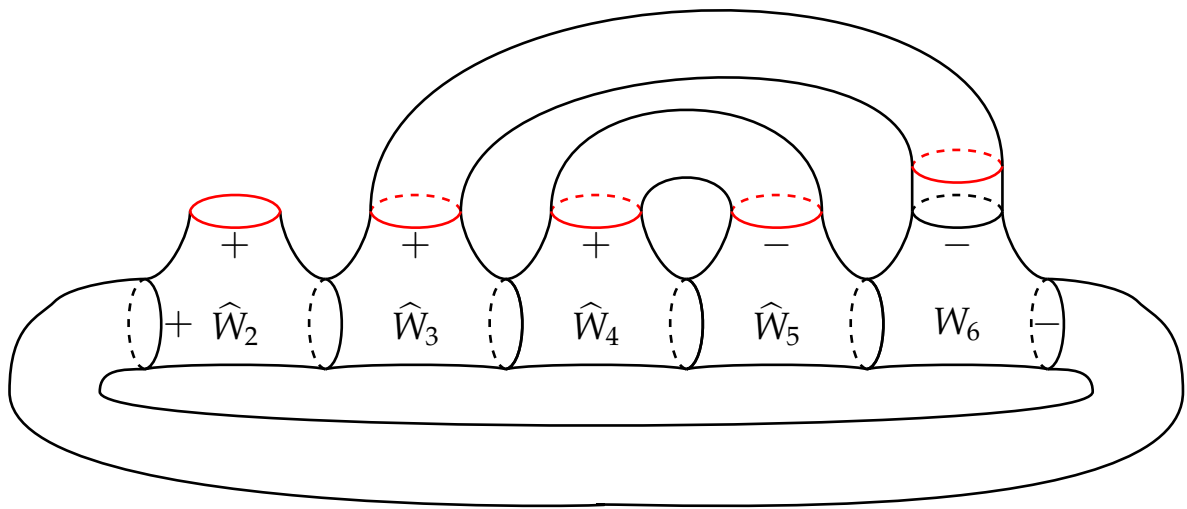
\begin{figure}
\begin{center}
\begin{tikzpicture}[scale=0.3]
\draw [dashed, line width=1pt]
(2,1)
 .. controls ++(165:1) and ++(195:1) .. 
  (2, 5) 
  ;
  \draw [line width=1pt]
    (2, 5) 
   .. controls ++(-15:1) and ++(15:1) .. 
   (2,1)
  .. controls ++(15:1) and ++(-15:-1) .. 
  (10, 1) 
  ;
  \draw [dashed, line width=1pt]
  (10,1)
    .. controls ++(165:1) and ++(195:1) .. 
    (10,5)
  ;
  \draw [line width=1pt]
    (10,5)
        .. controls ++(-15:1) and ++(15:1) .. 
    (10,1)
  ;
\draw [line width=1pt]
(2,5)
 .. controls ++(0:1) and ++(270:1) .. 
  (4, 8) 
;
\draw [line width=1pt]
(10,5)
 .. controls ++(180:1) and ++(270:1) .. 
  (8, 8) 
;
\draw [color=red, line width=1pt]
(4,8)
 .. controls ++(-75:1) and ++(255:1) .. 
  (8, 8) 
  .. controls ++(105:1) and ++(75:1) .. 
  (4,8) 
;
\node at (6,3) {\large$\widehat{W}_{2}$};
\node at (3.5,3) {\large$+$};
\node at (6,6.5) {\large$+$};

\draw [line width=1pt]
   (12-2,1)
  .. controls ++(15:1) and ++(-15:-1) .. 
  (20-2, 1) 
  ;
  \draw [dashed, line width=1pt]
  (20-2,1)
    .. controls ++(165:1) and ++(195:1) .. 
    (20-2,5)
  ;
  \draw [line width=1pt]
    (20-2,5)
        .. controls ++(-15:1) and ++(15:1) .. 
    (20-2,1)
  ;
\draw [line width=1pt]
(12-2,5)
 .. controls ++(0:1) and ++(270:1) .. 
  (14-2, 8) 
;
\draw [line width=1pt]
(20-2,5)
 .. controls ++(180:1) and ++(270:1) .. 
  (18-2, 8) 
;
\draw [color=red, line width=1pt]
(14-2,8)
 .. controls ++(-75:1) and ++(255:1) .. 
  (18-2, 8) 
  ;
  \draw [dashed, color=red, line width=1pt]
    (18-2, 8) 
  .. controls ++(105:1) and ++(75:1) .. 
  (14-2,8) 
;
\node at (16-2,3) {\large$\widehat{W}_{3}$};
\node at (14,6.5) {\large$+$};
\draw [line width=1pt]
  (22-4, 5) 
   .. controls ++(-15:1) and ++(15:1) .. 
   (22-4,1)
  .. controls ++(15:1) and ++(-15:-1) .. 
  (30-4, 1) 
  ;
  \draw [dashed, line width=1pt]
  (30-4,1)
    .. controls ++(165:1) and ++(195:1) .. 
    (30-4,5)
  ;
  \draw [line width=1pt]
    (30-4,5)
        .. controls ++(-15:1) and ++(15:1) .. 
    (30-4,1)
  ;
\draw [line width=1pt]
(22-4,5)
 .. controls ++(0:1) and ++(270:1) .. 
  (24-4, 8) 
;
\draw [line width=1pt]
(30-4,5)
 .. controls ++(180:1) and ++(270:1) .. 
  (28-4, 8) 
;
\draw [color=red, line width=1pt]
(24-4,8)
 .. controls ++(-75:1) and ++(255:1) .. 
  (28-4, 8) 
  ;
  \draw [dashed, color=red, line width=1pt]
    (28-4, 8) 
  .. controls ++(105:1) and ++(75:1) .. 
  (24-4,8) 
;
\node at (26-4,3) {\large$\widehat{W}_{4}$};
\node at (22,6.5) {\large$+$};

\draw [line width=1pt]
  (32-6, 5) 
   .. controls ++(-15:1) and ++(15:1) .. 
   (32-6,1)
  .. controls ++(15:1) and ++(-15:-1) .. 
  (40-6, 1) 
  ;
  \draw [dashed, line width=1pt]
  (40-6,1)
    .. controls ++(165:1) and ++(195:1) .. 
    (40-6,5)
  ;
  \draw [line width=1pt]
    (40-6,5)
        .. controls ++(-15:1) and ++(15:1) .. 
    (40-6,1)
  ;
\draw [line width=1pt]
(32-6,5)
 .. controls ++(0:1) and ++(270:1) .. 
  (34-6, 8) 
;
\draw [line width=1pt]
(40-6,5)
 .. controls ++(180:1) and ++(270:1) .. 
  (38-6,8) 
;
\draw [color=red, line width=1pt]
(34-6,8)
 .. controls ++(-75:1) and ++(255:1) .. 
  (38-6, 8) 
    ;
  \draw [dashed, color=red, line width=1pt]
    (38-6, 8) 
  .. controls ++(105:1) and ++(75:1) .. 
  (34-6,8) 
;
\node at (36-6,3) {\large$\widehat{W}_{5}$};
\node at (30,6.5) {\large$-$};
\draw [line width=1pt]
  (42-8, 5) 
   .. controls ++(-15:1) and ++(15:1) .. 
   (42-8,1)
  .. controls ++(15:1) and ++(-15:-1) .. 
  (50-8, 1) 
  ;
  \draw [dashed, line width=1pt]
  (50-8,1)
    .. controls ++(165:1) and ++(195:1) .. 
    (50-8,5)
  ;
  \draw [line width=1pt]
    (50-8,5)
        .. controls ++(-15:1) and ++(15:1) .. 
    (50-8,1)
  ;
\draw [line width=1pt]
(42-8,5)
 .. controls ++(0:1) and ++(270:1) .. 
  (44-8, 8) 
;
\draw [line width=1pt]
(50-8,5)
 .. controls ++(180:1) and ++(270:1) .. 
  (48-8,8) 
;
\draw [line width=1pt]
(44-8,8)
 .. controls ++(-75:1) and ++(255:1) .. 
  (48-8, 8) 
  ;
  \draw [dashed, line width=1pt]
    (48-8, 8) 
  .. controls ++(105:1) and ++(75:1) .. 
  (44-8,8) 
;
\node at (46-8,3) {\large${W}_{6}$};
\node at (42,3) {\large$-$};
\node at (38,6.5) {\large$-$};

\draw [line width=1pt]
(44-8,8)
 .. controls ++(90:1) and ++(-90:1) .. 
  (44-8, 10) 
  ;
  \draw [line width=1pt]
  (48-8,8)
  .. controls ++(90:1) and ++(-90:1) .. 
  (48-8,10) 
;
\draw [color=red, line width=1pt]
(44-8,10)
 .. controls ++(-75:1) and ++(255:1) .. 
  (48-8, 10) 
  ;
  \draw [dashed, color=red, line width=1pt]
    (48-8, 10) 
  .. controls ++(105:1) and ++(75:1) .. 
  (44-8,10) 
;
\draw [line width=1pt]
(2,1)
 .. controls ++(180:1) and ++(90:1) .. 
 (0,0)
  .. controls ++(270:2) and ++(270:2) .. 
 (44,0)
 .. controls ++(90:1) and ++(0:1) .. 
 (42,1)
     ;
\draw [line width=1pt] 
(2,5)
 .. controls ++(180:1) and ++(45:1) .. 
 (-2,4)
  .. controls ++(-135:1) and ++(90:3) .. 
 (-4,0)
  .. controls ++(270:6) and ++(-0:4) .. 
  (22,-5)
  .. controls ++(0:4) and ++(-90:6) ..
 (48,0)
  .. controls ++(90:3) and ++(135:-1) .. 
 (46,4)
 .. controls ++(135:1) and ++(0:1) .. 
 (42,5)
     ;

\draw [line width=1pt] 
(20,8)
 .. controls ++(90:6) and ++(90:6) .. 
 (32,8)
;
\draw [line width=1pt] 
(24,8)
 .. controls ++(90:2) and ++(90:2) .. 
 (28,8)
;


\draw [line width=1pt] 
(12,8)
 .. controls ++(90:11) and ++(90:10) .. 
 (40,10)
;
\draw [line width=1pt] 
(16,8)
 .. controls ++(90:7) and ++(90:6) .. 
 (36,10)
;

\end{tikzpicture}
\end{center}
\caption{Schematic of a Cap formed from five pieces glued together with the final three gluings indicated by tubes. The $\pm$ signs indicate orientations, which need to cancel as boundary components are glued together.}
\label{fig:pants}
\end{figure}
As for working on the fifth piece $W_{6}^{4}$ to try to reduce the Euler characteristic one more time,
we note that it is the same as
\[
W^{4}[\varphi_{6}\varphi_{7}, \varphi_{6}^{-1}, \varphi_{7}^{-1}].
\]
The problem is that the two (negative) Dehn twists are about intersecting simple closed curves, with geometric intersection number 2 and algebraic intersection number 0, rather than disjoint simple closed curves.


\section{Comments on signatures}
We originally intended to treat the signatures of the manifolds constructed here as well, but found that the needed details and attention to orientations overwhelmed the main point, and in the end nothing unexpected was discovered.   The challenge is to compute the signatures of the various pieces, in certain cases applying the Meyer signature formula (a specialization of Wall's non-additivity invariant) and Novikov additivity. According to our calculations the Cores have signature 1 as do the Splitters, while the Caps have signature $-1$, all with suitable choice of orientations. If we did it correctly, the manifolds constructed here with $\chi=13+2n$ all have signature $\sigma=1$ (with appropriate orientation). More generally one can construct manifolds with $\sigma=s$ and $\chi=13s+2n$ for any integers $s,n\ge 0$. For the latter one needs one additional building block that caps off groups of six copies  of $T^{2}(\tau_{a})$ together and has signature $-4$ and Euler characteristic 0. Further details are omitted. 

The most likely conjecture about the broader geography of aspherical 4-manifolds appears to be the following:
\begin{conj}
If $X^{4}$ is a closed, oriented, aspherical $4$-manifold, then $\chi(X^{4})\ge \left|\sigma(X^{4})\right|$.
\end{conj}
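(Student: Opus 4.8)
The plan is to deduce the inequality from the Singer conjecture in dimension four, combined with Atiyah's $L^{2}$-index theorem applied in two ways. Write $\pi=\pi_{1}(X^{4})$ and let $\widetilde X\to X^{4}$ be the universal cover. Asphericity makes $\widetilde X$ contractible, while $\pi$ acts on it freely, properly, and cocompactly; since a closed $4$-manifold is not contractible, $\pi$ is infinite. Let $b_{j}^{(2)}(\widetilde X)=\dim_{\mathcal N\pi}\mathcal H^{j}_{(2)}(\widetilde X)$ denote the $L^{2}$-Betti numbers, computed from the reduced $L^{2}$-cohomology $\mathcal H^{j}_{(2)}(\widetilde X)$ regarded as a Hilbert $\mathcal N\pi$-module.

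First, by the $L^{2}$-index theorem, $\chi(X^{4})=\sum_{j=0}^{4}(-1)^{j}b_{j}^{(2)}(\widetilde X)$; $L^{2}$-Poincar\'e duality gives $b_{j}^{(2)}=b_{4-j}^{(2)}$, and $b_{0}^{(2)}=0$ because $\pi$ is infinite, so $\chi(X^{4})=b_{2}^{(2)}(\widetilde X)-2\,b_{1}^{(2)}(\widetilde X)$. The Singer conjecture asserts $b_{j}^{(2)}(\widetilde X)=0$ for $j\ne 2$ (equivalently, here, $b_{1}^{(2)}(\widetilde X)=0$), and granting it we get $\chi(X^{4})=b_{2}^{(2)}(\widetilde X)$. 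Second, applying the $L^{2}$-index theorem to the signature operator (Atiyah) shows the ordinary signature equals the $L^{2}$-signature $\sigma^{(2)}(X^{4})$, namely the difference $\dim_{\mathcal N\pi}\mathcal H^{2}_{(2),+}-\dim_{\mathcal N\pi}\mathcal H^{2}_{(2),-}$ of the von Neumann dimensions of the maximal positive- and negative-definite $\mathcal N\pi$-submodules for the equivariant intersection form on $\mathcal H^{2}_{(2)}(\widetilde X)$. That form is nondegenerate by $L^{2}$-Poincar\'e duality, so the two dimensions sum to $b_{2}^{(2)}(\widetilde X)$; being nonnegative, they force $|\sigma^{(2)}(X^{4})|\le b_{2}^{(2)}(\widetilde X)$. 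Combining, $|\sigma(X^{4})|=|\sigma^{(2)}(X^{4})|\le b_{2}^{(2)}(\widetilde X)=\chi(X^{4})$.

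The main obstacle is the Singer conjecture itself, which is open in general — already the vanishing $b_{1}^{(2)}(\widetilde X)=0$, which is all the argument uses, is unknown for a general closed aspherical $4$-manifold. It is, however, available in cases relevant here: for real and complex hyperbolic aspherical $4$-manifolds the $L^{2}$-cohomology of $\mathbb H^{4}$, resp.\ of the complex hyperbolic plane, is concentrated in the middle degree (Dodziuk; Donnelly--Xavier), so the ball-quotient examples of Subsection \ref{subsec:deeper} satisfy the bound, and it also holds under suitable curvature pinching. One could alternatively try to verify the inequality directly for the manifolds built in this paper, exploiting the additivity of $L^{2}$-Betti numbers along the $\pi_{1}$-injective gluings into Cores, Splitters, Connectors, and Caps together with Novikov additivity for the signature. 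There is finally a purely algebro-geometric route: for an aspherical complex surface of general type one passes to the minimal model, where $0\le c_{1}^{2}\le 3c_{2}$ by Bogomolov--Miyaoka--Yau, and the identities $c_{1}^{2}=2\chi+3\sigma$ and $c_{2}=\chi$ give $\chi\ge\tfrac32|\sigma|$, a fortiori $\chi\ge|\sigma|$.
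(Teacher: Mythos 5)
The statement you are proving is not a theorem of the paper but one of its explicitly open conjectures (stated in the final section on signatures, next to the stronger conjecture $\chi\ge 3|\sigma|$, which is only known for surface bundles over surfaces by Hamenst\"adt); the paper offers no proof and merely remarks that its constructions are consistent with it. Your argument does not close that gap: as you concede, everything rests on the vanishing $b_{1}^{(2)}(\widetilde X)=0$, which is precisely the content of the Singer conjecture for closed aspherical $4$-manifolds and is open in general. Without that input, your own computation gives only $\chi(X^{4})=b_{2}^{(2)}-2b_{1}^{(2)}$ together with $|\sigma(X^{4})|=|\sigma^{(2)}(X^{4})|\le b_{2}^{(2)}$, which yields nothing --- it does not even recover $\chi\ge 0$, i.e.\ the sign conjecture quoted in the introduction. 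So what you have is a correct and standard \emph{conditional} reduction (Singer implies the conjectured inequality), not a proof.

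That said, the individual analytic steps are sound and worth recording: Atiyah's $L^{2}$-index theorem for the Euler characteristic and for the signature operator, $L^{2}$-Poincar\'e duality, $b_{0}^{(2)}=0$ for infinite fundamental group, and the estimate $|\sigma^{(2)}|\le b_{2}^{(2)}$ coming from the splitting of the nondegenerate equivariant intersection form on middle-degree reduced $L^{2}$-cohomology. Your unconditional special cases are also genuine --- real and complex hyperbolic ball quotients via Dodziuk and Donnelly--Xavier, and aspherical complex surfaces of general type via Bogomolov--Miyaoka--Yau (where asphericity already forces minimality, since a $(-1)$-curve would be a null-homotopic, hence self-intersection-zero, sphere) --- but none of these covers the Haken manifolds constructed in this paper, let alone an arbitrary closed oriented aspherical $4$-manifold. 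The honest formulation of your result is: the Singer conjecture in dimension four implies this conjecture; the conjecture itself remains open.
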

A yet stronger conjecture, known to hold for surface bundles over surfaces by a result of Hamenst\"adt \cite{Hamenstaedt2012}, is as follows:
\begin{conj}
If $X^{4}$ is a closed, oriented, aspherical $4$-manifold, then $\chi(X^{4})\ge 3\left|\sigma(X^{4})\right|$.
\end{conj}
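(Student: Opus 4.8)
The plan is to route through the $L^2$-cohomology of the universal cover $\widetilde X$, which for an aspherical $X$ is a homotopy invariant of $X$ — equivalently a group invariant of $\pi=\pi_1(X)$. Write $b_i^{(2)}=b_i^{(2)}(\widetilde X)$ for the von Neumann dimensions of the reduced $L^2$-cohomology spaces $\mathcal H^i_{(2)}(\widetilde X)$. Both sides of the asserted inequality admit $L^2$-descriptions: Atiyah's $L^2$-index theorem gives $\chi(X)=\sum_{i=0}^{4}(-1)^i b_i^{(2)}$, and Atiyah's $L^2$-signature theorem gives $\sigma(X)=\sigma^{(2)}(\widetilde X)$, the signature of the $L^2$-intersection form on the middle space $\mathcal H^2_{(2)}(\widetilde X)$; decomposing that Hilbert $\pi$-module into positive and negative parts one has $b_2^{(2)}=b^{+}+b^{-}$ and $\sigma(X)=b^{+}-b^{-}$, where $b^{\pm}\ge 0$ denote the dimensions of those parts.

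Since a closed aspherical $4$-manifold automatically has infinite fundamental group (a nontrivial finite group has infinite cohomological dimension), one has $b_0^{(2)}=b_4^{(2)}=0$, and by Poincar\'e duality $b_3^{(2)}=b_1^{(2)}$, so the first key step is the Singer Conjecture in dimension $4$: $b_1^{(2)}=0$. Granting it, $\chi(X)=b_2^{(2)}=b^{+}+b^{-}\ge |b^{+}-b^{-}|=|\sigma(X)|$, which already yields the first, weaker, conjecture $\chi\ge|\sigma|$ — but only conditionally, since Singer's Conjecture is itself open (though known for many classes, e.g.\ nonpositively curved and aspherical K\"ahler manifolds).

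The second key step, needed to replace $|\sigma|$ by $3|\sigma|$, is a \emph{balance} estimate for the middle $L^2$-intersection form: $b^{+}\le 2b^{-}$ and $b^{-}\le 2b^{+}$. Granting this, when $b^{+}\ge b^{-}$ one computes $\chi-3\sigma=(b^{+}+b^{-})-3(b^{+}-b^{-})=4b^{-}-2b^{+}\ge 0$, and symmetrically $\chi+3\sigma\ge 0$, so $\chi\ge 3|\sigma|$. To prove the balance estimate I would seek an $L^2$ analogue of the Hodge- and Lefschetz-type arguments behind Hamenst\"adt's theorem for surface bundles over surfaces \cite{Hamenstaedt2012}, or of the curvature identities $2\chi\pm 3\sigma=\frac{1}{4\pi^2}\int_X\big(2|W^{\pm}|^2+\tfrac{s^2}{24}-\tfrac12|\mathring r|^2\big)$, in which asphericity should help control the negative traceless-Ricci term via a good harmonic $L^2$-representative. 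A natural intermediate target is to prove $\chi\ge 3|\sigma|$ first for manifolds admitting a nonpositively curved metric — where the Gauss--Bonnet integrand is pointwise nonnegative and Gromov's boundedness of the Euler and Pontryagin classes of aspherical manifolds, together with Milnor--Wood-type bounds, can be brought to bear — and then for fibered or Haken examples (including the manifolds assembled in Sections \ref{sec:blocks}--\ref{sec:caps}) by a block-by-block argument using additivity of $\chi$ and Novikov--Wall additivity of $\sigma$.

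The main obstacle is clear: the Singer Conjecture in dimension $4$ is open, so every route through it is conditional, and the balance estimate $b^{\pm}\le 2b^{\mp}$ seems to demand genuinely new input — it is a statement about the signature operator on $\widetilde X$ that does not follow from index theory alone, and for which the surface-bundle case is presently the only substantial evidence. Failing an unconditional argument, I would record the partial results within reach: $\chi\ge\tfrac32|\sigma|$ for Einstein metrics (Hitchin--Thorpe, from the identities above with $\mathring r=0$), $\chi\ge 3|\sigma|$ for surface bundles over surfaces \cite{Hamenstaedt2012}, and — most relevant here — a direct verification of $\chi\ge 3|\sigma|$ for the Haken $4$-manifolds of this paper by tracking $\chi$ and $\sigma$ through the Cores, Splitters, Connectors, and Caps.
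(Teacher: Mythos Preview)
This statement is recorded in the paper as an open \emph{conjecture}, not a theorem: the paper offers no proof, only the remark that the inequality is known for surface bundles over surfaces by Hamenst\"adt and that the constructions of the paper are consistent with it. There is therefore no proof in the paper to compare your proposal against.

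Your proposal is not a proof either, and to your credit you say so. It is a conditional program resting on two open inputs. The first, the Singer Conjecture in dimension $4$ ($b_1^{(2)}=0$ for closed aspherical $4$-manifolds), is a well-known open problem; granting it buys only $\chi\ge|\sigma|$, the weaker conjecture. The second, your ``balance estimate'' $b^{\pm}\le 2b^{\mp}$ on the $L^2$-intersection form, is the \emph{entire} content of the strengthening from $|\sigma|$ to $3|\sigma|$, and you give no argument for it beyond analogy with Hamenst\"adt's surface-bundle case and gestures toward the Chern--Gauss--Bonnet integrands $2\chi\pm 3\sigma$. Those curvature identities do not obviously interact with asphericity absent a geometric hypothesis (Einstein, nonpositive curvature, K\"ahler), and even then they yield pointwise statements about curvature, not the $L^2$-cohomological balance you need. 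So the genuine gap is that the heart of the conjecture --- why the middle $L^2$-intersection form of an aspherical $4$-manifold should be nearly balanced --- is simply restated rather than attacked. What you have written is a reasonable heuristic outline, fully consistent with the paper's own position that the statement is open.
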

The results of this papers are consistent with either of these conjectures. A closed, oriented, aspherical $4$-manifold with $\chi=1$, however, would violate at least the second one.
\bibliographystyle{amsalpha}

\begin{thebibliography}{{Ham}12}

\bibitem[CS10]{CartwrightSteger2010}
Donald~I. Cartwright and Tim Steger, \emph{Enumeration of the 50 fake
  projective planes}, C. R. Math. Acad. Sci. Paris \textbf{348} (2010),
  no.~1-2, 11--13. \MR{2586735}

\bibitem[DJ91]{DavisJanuszkiewicz1991}
Michael~W. Davis and Tadeusz Januszkiewicz, \emph{Hyperbolization of
  polyhedra}, J. Differential Geom. \textbf{34} (1991), no.~2, 347--388.
  \MR{1131435 (92h:57036)}

\bibitem[FR11]{FoozwellRubinstein2011}
Bell Foozwell and Hyam Rubinstein, \emph{Introduction to the theory of {H}aken
  {$n$}-manifolds}, Topology and geometry in dimension three, Contemp. Math.,
  vol. 560, Amer. Math. Soc., Providence, RI, 2011, pp.~71--84. \MR{2866924
  (2012i:57040)}

\bibitem[FR16]{FoozwellRubinstein2016}
\bysame, \emph{Four-dimensional {H}aken cobordism theory}, Illinois J. Math.
  \textbf{60} (2016), no.~1, 1--17. \MR{3665167} arXiv:math.GT/1212.6301

\bibitem[{Ham}12]{Hamenstaedt2012}
U.~{Hamenst{\"a}dt}, \emph{{Signatures of surface bundles and Milnor Wood
  inequalities}}, ArXiv e-prints (2012). arXiv:math.GT/1206.0263

\bibitem[Hir83]{Hirzebruch1983}
F.~Hirzebruch, \emph{Arrangements of lines and algebraic surfaces}, Arithmetic
  and geometry, {V}ol. {II}, Progr. Math., vol.~36, Birkh\"auser, Boston,
  Mass., 1983, pp.~113--140. \MR{717609}

\bibitem[Luo88]{Luo1988}
Feng Luo, \emph{The existence of {$K(\pi, 1)\;4$}-manifolds which are rational
  homology {$4$}-spheres}, Proc. Amer. Math. Soc. \textbf{104} (1988), no.~4,
  1315--1321. \MR{969062 (89m:57017)}

\bibitem[Mum79]{Mumford1979}
D.~Mumford, \emph{An algebraic surface with {$K$} ample, {$(K^{2})=9$},
  {$p_{g}=q=0$}}, Amer. J. Math. \textbf{101} (1979), no.~1, 233--244.
  \MR{527834 (80j:14032)}

\bibitem[Oer84]{Oertel1984}
Ulrich Oertel, \emph{Closed incompressible surfaces in complements of star
  links}, Pacific J. Math. \textbf{111} (1984), no.~1, 209--230. \MR{732067}

\bibitem[RT05]{RatcliffeTschantz2005}
John~G. Ratcliffe and Steven~T. Tschantz, \emph{Some examples of aspherical
  4-manifolds that are homology 4-spheres}, Topology \textbf{44} (2005), no.~2,
  341--350. \MR{2114711 (2005k:57041)} arXiv:math.GT/0312436

\bibitem[Sto07]{Stover2007}
Matthew Stover, \emph{Property ({FA}) and lattices in {${\rm SU}(2,1)$}},
  Internat. J. Algebra Comput. \textbf{17} (2007), no.~7, 1335--1347.
  \MR{2372600 (2008k:22021)} arXiv:math.GT/0512180

\bibitem[Sto13]{Stover2013}
\bysame, \emph{Erratum and {A}ddendum: ``{P}roperty ({FA}) lattices in {${\rm
  SU(2,1)}$}'' [mr2372600]}, Internat. J. Algebra Comput. \textbf{23} (2013),
  no.~7, 1783--1787. \MR{3143602}

\bibitem[Whi39]{Whitehead1939}
J.H.C. Whitehead, \emph{{On the asphericity of regions in a 3-sphere.}},
  Fundam. Math. \textbf{32} (1939), 149--166 (English).

\end{thebibliography}
\providecommand{\bysame}{\leavevmode\hbox to3em{\hrulefill}\thinspace}
\providecommand{\MR}{\relax\ifhmode\unskip\space\fi MR }
\providecommand{\MRhref}[2]{%
  \href{http://www.ams.org/mathscinet-getitem?mr=#1}{#2}
}
\providecommand{\href}[2]{#2}

\end{document}